\documentclass[11pt]{amsart}

\usepackage{verbatim, amssymb,hyperref}

\usepackage{color}
\usepackage{bbm}
\usepackage{graphicx}
\usepackage{subcaption}
\usepackage{todonotes}

\newcommand{\eps}{\varepsilon}

\newcommand{\einschraenkung}{\,\rule[-5pt]{0.4pt}{12pt}\,{}}

\newcommand{\cF}{\mathcal F}

\newcommand{\cO}{\mathcal O}

\newcommand{\sfrac}[2]{\mbox{$\frac{#1}{#2}$}}

\newcommand{\Id}{\operatorname{Id}}

\newcommand{\spec}{\operatorname{spec}}

\newcommand{\IV}{\mathbb V}

\newcommand{\IB}{\mathbb B}

\newcommand{\1}{1\hspace{-0.098cm}\mathrm{l}}

\renewcommand{\P}{{\mathbb P}}

\newcommand{\N}{{\mathbb N}}
\newcommand{\IA}{{\mathbb A}}

\newcommand{\IC}{{\mathbb C}}
\newcommand{\E}{{\mathbb E}}

\newcommand{\R}{{\mathbb R}}
\newcommand{\cT}{{\mathcal T}}
\newcommand{\cN}{{\mathcal N}}

\newcommand{\Hess}{\operatorname{Hess}}

\newcommand{\proj}{\operatorname{proj}}

\theoremstyle{plain}

\newtheorem{theorem}{Theorem}[section]
\newtheorem{proposition}[theorem]{Proposition}
\newtheorem{lemma}[theorem]{Lemma}
\newtheorem{corollary}[theorem]{Corollary}
\newtheorem{definition}[theorem]{Definition}

\theoremstyle{definition}

\title[Optimal Asymptotic Rates for (S)GD under the Local PL-Condition]%
{Optimal Asymptotic Rates for (Stochastic) Gradient Descent under the Local PL-Condition: A Geometric Approach}

\author[]
{Sebastian Kassing}
\address{Sebastian Kassing\\
	Department of Mathematics \& Informatics,  
  University of Wuppertal,
    42119 Wuppertal, Germany}
\email{kassing@uni-wuppertal.de}

\author[]
{Thomas Kruse}
\address{Thomas Kruse\\
	Department of Mathematics \& Informatics,  
  University of Wuppertal,
    42119 Wuppertal, Germany}
\email{tkruse@uni-wuppertal.de}

\keywords{Stochastic gradient descent, PL-inequality, rate of convergence, stable manifold}
\subjclass[2020]{Primary 90C26; Secondary 90C15, 62L20, 90C30}

\begin{document}

\begin{abstract} 
	Stochastic gradient descent (SGD) has been studied extensively over the past decades due to its simplicity and broad applicability in machine learning.
    In this work, we analyze the local behavior of gradient descent and stochastic gradient descent for minimizing $C^2$-functions that satisfy the Polyak-\L ojasiewicz (PL) inequality and under a multiplicative gradient noise model motivated by overparameterized neural networks. Using a geometric interpretation of the PL-condition, we prove a simple yet surprising fact: in this possibly non-convex setting, the asymptotic convergence rate of (S)GD matches the rate obtained for strongly convex quadratics.
\end{abstract}

\maketitle

\section{Introduction}
Many machine learning problems involve the minimization of a function $f: \mathbb{R}^d \to \mathbb{R}$ given as an expectation
\begin{align} \label{eq:loss}
f(\theta) = \mathbb{E}[\tilde f(\theta,X)],
\end{align}
where $X$ is a random variable representing the data distribution and $\tilde f$ is a loss function measuring the discrepancy between the prediction of the model with the parameters $\theta$ and the observed data.

For example, in supervised learning one is typically given a finite training data set $x_1,\dots,x_N$, consisting of  independent samples drawn from the distribution of $X$. 
A standard approach is therefore \emph{empirical risk minimization} (ERM), where the expected loss is approximated by the empirical average
\begin{align} \label{eq:empiricalloss}
f_N(\theta) = \frac{1}{N}\sum_{i=1}^N \tilde f(\theta,x_i).
\end{align}
The aim is then to find parameters $\theta \in \mathbb{R}^d$ that minimize this empirical risk $f_N$, which serves as an accessible surrogate for the true expected loss $f$ given by \eqref{eq:loss}. In many modern applications, the number of samples $N$ is very large, making the evaluation of the full gradient $\nabla f_N(\theta)$ computationally expensive. This motivates the use of stochastic optimization methods, most prominently stochastic gradient descent (SGD). Instead of computing the exact gradient, SGD uses a stochastic approximation of the gradient obtained from randomly sampled data points.

In this work, we restrict our attention to the optimization error, i.e., depending on the situation, the minimization of either $f$ or $f_N$, and ignore the statistical approximation error $f-f_N$. Since $f_N$ can also be written as an expectation, we will from now on denote the objective function by $f$ and adopt the definition in \eqref{eq:loss}. Next, we define stochastic gradient approximations to the gradient $\nabla f$.

Let $(X_{k,i})_{k\in\mathbb{N}, \; i=1,\dots,M}$ be i.i.d.\ samples that have the same distribution as $X$, where $M \in \N$ denotes the mini-batch size. Define a filtration $(\cF_{k})_{k \in \N_0} := (\sigma(\{X_{j,i}: j \le k, i \le M\}))_{k \in \N_0}$. At iteration $k \in \N$, the stochastic gradient is defined as the mini-batch average
$$
G_k(\theta_{k-1}) 
= \frac{1}{M}\sum_{i=1}^{M} \nabla \tilde f(\theta_{k-1},X_{k,i}).
$$
Note that $G$ satisfies $\mathbb{E}[G_k(\theta)] = \nabla f(\theta)$. 
Hence, the stochastic gradient can be decomposed into
$$
G_k(\theta_{k-1}) = \nabla f(\theta_{k-1}) + D_k,
$$
where
$$
D_k := \frac{1}{M}\sum_{i=1}^{M} \nabla \tilde f(\theta_{k-1},X_{i,k}) - \nabla f(\theta_{k-1})
$$
represents the stochastic gradient noise and satisfies $\mathbb{E}[D_k \mid \cF_{k-1}] = 0$.

With this example in mind, we give a general definition of the SGD scheme considered in this work.

\begin{definition}
Let $f: \R^d \to \R$ be a continuously differentiable function, and let $(\mathcal F_k)_{k \in \N_0}$ be a filtration. We call an adapted stochastic process $(\theta_k)_{k \in \N_0}$ that satisfies for all $k \in \N$
\begin{equation} \label{eq:SGD}
\theta_k = \theta_{k-1} - \gamma \big(\nabla f(\theta_{k-1}) + D_k\big),
\end{equation}
\emph{stochastic gradient descent scheme (SGD)}, 
where $\gamma>0$ denotes the step-size and $(D_k)_{k \in \N}$ is an adapted integrable process satisfying for all $k \in \N$ that
$$
    \E[D_k \mid \cF_{k-1}]=0.
$$
By setting $D_k \equiv 0$, the deterministic gradient descent scheme can also be expressed as in  \eqref{eq:SGD}.
\end{definition}

An extensive classical and recent literature examines the convergence properties of (stochastic) gradient descent under different structural assumptions on the objective function $f$ and the gradient noise $(D_k)_{k \in \N}$. We refer the reader, e.g., to \cite{robbins1951stochastic, ljung1992stochastic,  kushner2003stochastic,  absil2005convergence, benaim2006dynamics,  tadic2015convergence, bottou2018optimization, mertikopoulos2020almost,  dereich2021convergence, garrigos2023handbook} and the references therein.

In this work, we consider the setting in which $f$ satisfies a local Polyak-\L ojasiewicz (PL) inequality \cite{polyak1963gradient, lojasiewicz1963propriete} and the gradient noise vanishes at the minima. 
More precisely, we derive an asymptotic convergence rate for SGD under the assumption that the method finds a small neighborhood of a critical point at which a local PL-inequality holds and such that the conditional second moment of $D_k$ is bounded by the optimality gap. This assumption on the error term is often referred to as the overparameterized setting; see e.g. \cite{ma2018power, vaswani2019fast, liu2023aiming, gess2026exponential}. 

\begin{definition}\label{ass:PL}
    Let $0<\mu \le L$, $\sigma \ge 0$ and $f: \R^d \to \R$ be a continuously differentiable function. A local minimum $\theta^\ast \in \R^d$ of $f$ is called \emph{$(\mu,L)$-regular point} if there exists an open neighborhood $U \subset \R^d$ of $\theta^\ast$ such that $f\big|_U: U \to \R$
    \begin{enumerate}
        \item[(A1)] is two times continuously differentiable and minimized by $\theta^\ast$, i.e. $f(\theta) \ge f(\theta^\ast)$ for all $\theta \in U$,
		\item[(A2)] satisfies the PL-inequality for a constant $\mu >0$, i.e., 
		\begin{align} \label{eq:PL}
			\|\nabla f(\theta)\|^2 \ge 2 \mu (f(\theta)-f(\theta^\ast)), \quad \text{for all}\ \theta \in U, \text{ and } \tag{PL}
		\end{align}
		\item[(A3)] is $L$-smooth for a constant $L>0$, i.e., 
		$$
        \|\nabla f(\theta)-\nabla f(\theta')\|\le L\|\theta-\theta'\|,\quad\text{for all}\ \theta, \theta'\in U.
        $$
        \end{enumerate}
        If, additionally,       
        \begin{enumerate}
        \item[(A4)] for all $k \in \N$ one has almost surely on the event $\{\theta_{k-1} \in U\}$
        $$
            \mathbb{E}[D_k \mid \cF_{k-1}]=0 \quad \text{ and } \quad \E[\|D_k\|^2 \mid \mathcal F_{k-1}] \le \sigma (f(\theta_{k-1})-f(\theta^\ast)),
        $$
        $\theta^\ast$ is called \emph{$(\mu,L,\sigma)$-regular point}.
	\end{enumerate}	
\end{definition}

Note that all local minima in a small neighborhood of $\theta^\ast$ are also $(\mu,L,\sigma)$-regular points and have the same objective function value as $\theta^\ast$. We emphasize that our assumptions on the objective function are purely local and that strong convexity is not required, either globally or locally.

In the overparameterized regime, SGD is known to behave more like classical gradient descent, and convergence can be obtained without requiring vanishing step-sizes. Under global versions of assumptions (A2)-(A4), \cite{wojtowytsch2021stochastic} derives a convergence rate for SGD. More precisely, for step-sizes $\gamma < \frac{2\mu}{2\mu+\sigma} \frac{2}{L}$ one obtains linear convergence of $(f(\theta_k))_{k \in \N_0}$ with rate
\begin{equation}\label{eq:rate_wojtow}
    1-2\mu\gamma + \gamma^2 \frac{L(2\mu+\sigma)}{2}.
\end{equation}
In the noiseless case $\sigma=0$, this is consistent with the results in \cite{karimi2016linear} for gradient descent under the PL-condition (see also~\cite{vaswani2019fast, khaledbetter}).
Conceptually, this convergence rate results from combining a bound for the contractivity given by the PL-constant with a bound for the appearing error terms in the first-order Taylor approximation using the Lipschitz continuity of the gradient.

In the present work, we use ideas from differential geometry to further improve the asymptotic convergence rate for objective functions that locally satisfy the PL-inequality. We show that, although $f$ is possibly non-convex (even locally), the asymptotic convergence rate of SGD in the neighborhood of a $(\mu,L,\sigma)$-regular point coincides with the optimal convergence rate for the optimization of strongly convex quadratics with the same curvature restrictions. Although the statement may appear at first glance to be classical, it closes a genuine gap in the literature and is far from routine to prove.

Let us present a geometric intuition. 
In the case of optimizing a strongly convex quadratic using deterministic gradient descent (i.e., \(\sigma=0\)), the rate \(1-2\mu\gamma + \gamma^2 L\mu\) stated in \eqref{eq:rate_wojtow} for step-sizes $\gamma <  \frac{2}{L}$ can be improved to
$$
    \max\bigl(\lvert 1-\gamma\mu\rvert^2,\ \lvert 1-\gamma L\rvert^2\bigr).
$$
This improvement goes back to Polyak \cite{polyak1964some}.
The heuristic reason behind this is that in the quadratic case the problem can be decoupled into $d$ one-dimensional quadratic subproblems. For each one-dimensional quadratic, the PL-constant $\mu$ and the smoothness constant $L$ are both given by the second derivative at the critical point. 
Consequently, estimating the contraction by the smallest eigenvalue and the smoothness by the largest eigenvalue of the Hessian is overly pessimistic, since these bounds are attained in different directions rather than within the same one-dimensional subproblem. In this way, the flat and steep directions separate, leading to an improved estimate. Using a second-order Taylor approximation, this intuition can be carried over the the case of general strongly convex functions with Lipschitz continuous gradient; see \cite{polyak1964some}. We give a detailed account of the quadratic case including overparameterized gradient noise in Section~\ref{sec:quadratic}. 

The main contribution of this work is to show that a similar phenomenon occurs in the neighborhood of a $(\mu,L, \sigma)$-regular point. More precisely, we prove that local $C^2$-regularity together with a local PL-inequality is sufficient to asymptotically recover the optimal convergence rates of (S)GD on strongly convex quadratics, despite the possible non-convexity of $f$ and the potential non-uniqueness of the critical points. This main result is given by the following theorem.

\begin{theorem} \label{theo1}
	Let $0<\mu\le L$, $\sigma \ge 0$, and $f: \R^d \to \R$ be a continuously differentiable function. Let $(\theta_k)_{k\in\N_0}$ be the SGD scheme defined in \eqref{eq:SGD} with 
    \begin{equation}\label{eq:step_size_cond_main_thm}
         0<\gamma < \frac{2}{L+\frac{\sigma}{2}},
    \end{equation}
    and let $\theta^\ast \in \R^d$ be a $(\mu,L, \sigma)$-regular point.
    Then, for every $\delta >0$ there exists an open neighborhood $V$ of $\theta^\ast$ such that for the event $\IV := \bigcup_{k \in \N_0}\{\theta_k \in V\}$ the following holds: 
    there exists an event $\IC$ with $\P(\IC \cap \IV) \ge (1-\delta) \P(\IV)$ such that, on $\IC \cap \IV$, the SGD scheme $(\theta_k)_{k \in \N_0}$ almost surely converges to a (random) local minimum $\theta_\infty$ with $f(\theta_\infty) = f(\theta^\ast)$ and for every $\eps >0$ one has almost surely that
	\begin{align} \label{eq:rate1}
		\limsup_{k \to \infty} (m(\gamma)+\eps)^{-k}  \|\theta_k-\theta_\infty\|^2 =0\, ,
	\end{align}
	and
	\begin{align} \label{eq:mgamma}
		\limsup_{k \to \infty} (m(\gamma)+\eps)^{-k} (f(\theta_k)-f(\theta_\infty))=0\, ,
	\end{align}
	where
    $$
        m(\gamma):= \max\left((1 - \gamma \mu)^2+\frac{\gamma^2\sigma \mu}{2}, (1 - \gamma L)^2+\frac{\gamma^2\sigma L}{2}\right)
    $$
\end{theorem}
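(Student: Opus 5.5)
The plan is to exploit the geometric content of the local PL-inequality: near a $(\mu,L)$-regular point, the set of minimizers $\mathcal M := \{\theta \in U: f(\theta)=f(\theta^\ast)\}$ is, after shrinking $U$, a $C^1$-submanifold (indeed $C^1$ since $f\in C^2$). At each $p\in\mathcal M$ the Hessian $\Hess f(p)$ vanishes on $T_p\mathcal M$ and, by \eqref{eq:PL}, all its nonzero eigenvalues lie in $[\mu,L]$; its kernel is exactly $T_p\mathcal M$. I would first establish this, using that PL together with $C^2$ forces $\Hess f(p)$ to satisfy $\langle \Hess f(p) v,\Hess f(p)v\rangle \ge \mu\langle v,\Hess f(p) v\rangle$ (second-order expansion of \eqref{eq:PL} along $p+tv$). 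This gives the Morse–Bott property, and the implicit function theorem then yields the manifold structure with normal bundle $N_p\mathcal M=\operatorname{range}\Hess f(p)$. I also need a smooth local projection $\pi$ onto $\mathcal M$ with $\theta-\pi(\theta)\in N_{\pi(\theta)}\mathcal M$.

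Next, I decompose the dynamics in these coordinates. Writing $p_k=\pi(\theta_k)$ and $v_k=\theta_k-p_k$, a second-order Taylor expansion at $p_k$ gives $\nabla f(\theta_k)=H_k v_k + o(\|v_k\|)$ with $H_k=\Hess f(p_k)$. Then $\|\nabla f(\theta_k)\|\asymp \|v_k\|$ and $f(\theta_k)-f^\ast = \tfrac12\langle v_k,H_kv_k\rangle + o(\|v_k\|^2)$. The one-step recursion reads $v_{k+1}\approx (I-\gamma H_k)v_k-\gamma\,\proj_{N}D_{k+1}$ plus tangential drift. I would introduce a Lyapunov function adapted to the quadratic case of Section~\ref{sec:quadratic}, for instance $\Phi(\theta)=\|v\|^2$ or better $\langle v, H v\rangle$-weighted combinations. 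The aim is to get $\E[\|v_{k+1}\|^2\mid\cF_k]\le (m(\gamma)+\eps)\|v_k\|^2$ on the event that $\theta_k$ stays in a sufficiently small neighborhood $V_\eps$. Eigenwise, a component along an eigenvalue $\lambda\in[\mu,L]$ contracts by $(1-\gamma\lambda)^2$, and the noise contributes $\gamma^2\sigma\tfrac12\langle v,Hv\rangle$. Summing over eigen-directions gives the factor $\max_{\lambda\in[\mu,L]}\bigl((1-\gamma\lambda)^2+\tfrac{\gamma^2\sigma\lambda}{2}\bigr)$. Since this is convex in $\lambda$, it equals $m(\gamma)$, and it is $<1$ exactly under \eqref{eq:step_size_cond_main_thm}. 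The continuity of $H$ along $\mathcal M$ makes the error from freezing $H_k$ an $\eps$-perturbation.

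Then I handle the localization and convergence. The tangential movement satisfies $\|p_{k+1}-p_k\|\lesssim \gamma\|\nabla f(\theta_k)\|+\gamma\|D_{k+1}\|+o(\|v_k\|)$, so its expected sum is bounded by $C\sum_k \E\|v_k\|$, which decays geometrically. Starting in a small ball $V\subset V_\eps$ around $\theta^\ast$, a supermartingale argument (Doob/Ville inequality applied to $(m+\eps)^{-k}\|v_k\|^2$ stopped at exit from $V_\eps$) shows that, with probability at least $1-\delta$ conditional on hitting $V$, the iterates never leave $V_\eps$. The strong Markov-type restart at the first hitting time of $V$ gives the $\P(\IC\cap\IV)\ge(1-\delta)\P(\IV)$ form. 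On this event, $\sum\|p_{k+1}-p_k\|<\infty$ a.s., so $p_k\to\theta_\infty\in\mathcal M$. Moreover $\|\theta_k-\theta_\infty\|\le \|v_k\|+\sum_{j\ge k}\|p_{j+1}-p_j\|$, and both terms are $O((m+\eps)^{k/2})$ by Borel–Cantelli applied to the geometric moment bounds. Together with $f(\theta_k)-f^\ast\le \tfrac L2\|v_k\|^2$, this yields \eqref{eq:rate1} and \eqref{eq:mgamma}. Using $\eps/2$ in place of $\eps$ upgrades $O$ to $o$.

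The main obstacle I expect is the sharp one-step contraction. The noise bound (A4) is only in terms of $f-f^\ast\approx\tfrac12\langle v,Hv\rangle$, and the noise is not aligned with $N\mathcal M$. Its tangential part also moves $p_k$, and therefore moves $H_k$. I would try the Lyapunov function $\|v\|^2$ measured in the frozen eigenbasis of $H_{k}$. I would then bound the cross-terms of the projection $\pi(\theta_{k+1})$ versus $p_k$ by $O(\|v_k\|+\|D\|)\cdot(\|v_k\|+\|D\|)^{2}$ smallness, using the $C^1$-regularity of $\pi$ and of $p\mapsto H(p)$. If the tangential component of the noise spoils the estimate, I would instead keep the full $\|D\|^2$ term in the bound: $\|\proj_N D\|^2\le\|D\|^2$ costs nothing, since only $\tfrac{\gamma^2\sigma}{2}\langle v,Hv\rangle$ is claimed.
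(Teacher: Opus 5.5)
Your overall architecture matches the paper's: a normal-distance Lyapunov function contracting at rate $m(\gamma)+\eps$, summable expected tangential drift, a restart at the first hitting time of $V$, and Borel--Cantelli. The paper measures normal distance by $\|\proj_{\cN}(\Phi(\theta))\|^2$ for a flattening chart $\Phi$ with $D\Phi(\theta^\ast)$ orthogonal, rather than via a projection onto $\mathcal M$.

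\textbf{The quantifier gap.} In your argument $V\subset V_\eps$, so $V$ and the event $\IC$ depend on $\eps$. The theorem instead fixes $V$ and $\IC$ in terms of $\delta$ alone and then claims the rate for \emph{every} $\eps>0$ on $\IC\cap\IV$. The region where the one-step factor $m(\gamma)+\eps$ holds must in general shrink as $\eps\downarrow 0$, so you cannot patch this by intersecting over a sequence $\eps_j\downarrow 0$.

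The missing idea is a re-localisation at the random limit:
\begin{enumerate}
\item Use one fixed $\eps_0$ with $m(\gamma)+\eps_0<1$ only to get convergence to some $\theta_\infty$ in a compact set $K$ of regular points. Also show that, for every $\kappa>0$, $\|D_k\|\le\kappa$ for all but finitely many $k$; this follows by Borel--Cantelli from the summable bounds on the truncated moments of $\|D_{k+1}\|$.
\item For a given $\eps$, prove the local contraction with $\eps/2$ around every $\hat\theta\in K$, and extract a finite subcover $\tilde U_1,\dots,\tilde U_\ell$ of $K$.
\item On the convergence event the path lies in $\bigcup_{i}\bigcup_{N}\bigcap_{k\ge N}\{\theta_k\in\tilde U_i,\ \|D_k\|\le M_i\}$. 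Run the geometric-moment/Borel--Cantelli argument on each of these countably many events with deterministic $N$.
\end{enumerate}
This is exactly the last step of the paper's proof.

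\textbf{Unbounded noise in the one-step bound.} (A4) controls only the conditional second moment of $D_{k+1}$, so $\theta_{k+1}$ can jump out of the region where $\pi$ and your Taylor expansions are defined. A bound ``on the event that $\theta_k$ stays in $V_\eps$'' is then not well defined. If you instead restrict to the event that the next iterate stays, the cross term $\E[\langle(\Id-\gamma H_k)v_k,\proj_{\cN}D_{k+1}\rangle\mid\cF_k]$ no longer vanishes. The paper truncates the noise at $\|D_k\|\le\delta/\gamma$, with a $2\delta$ margin around $\tilde U$. It bounds the lost cancellation by $\E[\1_{\{\|D_k\|>\delta/\gamma\}}\|D_k\|\mid\cF_{k-1}]\le\frac{\gamma}{\delta}\E[\|D_k\|^2\mid\cF_{k-1}]$. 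This gives errors of order $\frac{R}{\delta}\|\proj_{\cN}(\Phi(\theta_{k-1}))\|^2$, which become negligible once $\tilde U$ is shrunk so that $R\ll\delta$. Your stopped supermartingale has to be built from such truncated quantities.

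\textbf{Regularity.} For $f\in C^2$ the minimizing manifold is only $C^1$ and $\Hess f$ is only continuous. So a $C^1$ map $\pi$ with $\theta-\pi(\theta)$ exactly normal need not exist; the metric projection onto a $C^1$ submanifold need not even be single-valued near it. The cubic remainder bounds you plan are also unavailable. You must work with $o(1)$-relative errors and a $C^1$ flattening chart, e.g.\ $\pi=\Phi^{-1}\circ\proj_{\cT}\circ\Phi$ with $\theta-\pi(\theta)$ only approximately normal, which is effectively what the paper does.

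\textbf{Manifold structure.} Your inequality $\|\Hess f(p)v\|^2\ge\mu\langle v,\Hess f(p)v\rangle$ is correct. Combined with the implicit function theorem, however, it only shows that $\{P\nabla f=0\}$ is a manifold \emph{containing} the critical set, where $P$ is the orthogonal projection onto $\operatorname{range}\Hess f(\theta^\ast)$. The reverse inclusion needs the error-bound consequence of PL, so simply invoke Proposition~\ref{thm:rebjock}.
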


Setting $\sigma=0$, we immediately get the following result for the deterministic gradient descent scheme. 

\begin{corollary}
    Let $0<\mu\le L$, and $f: \R^d \to \R$ be a continuously differentiable function. Let $(\theta_k)_{k\in\N_0}$ be the gradient descent scheme given by
    $$
        \theta_k = \theta_{k-1} - \gamma \nabla f(\theta_{k-1}) \quad (k \in \N),
    $$
    with $0< \gamma < \frac 2 L$, and let $\theta^\ast \in \R^d$ be a $(\mu,L)$-regular point.
    Then, there exists an open neighborhood $V$ of $\theta^\ast$ such that the following statement is true: if there exists an $N \in \N_0$ with $\theta_N \in V$ then $(\theta_k)_{k \in \N_0}$ converges to a local minimum $\theta_\infty$ with $f(\theta_\infty) = f(\theta^\ast)$ and for every $\eps >0$ one has
	\begin{align*}
		\limsup_{k \to \infty} \Big(\max\big(|1-\gamma\mu|,|1-\gamma L|\big)+\eps\Big)^{-2k}  \|\theta_k-\theta_\infty\|^2 =0\, ,
	\end{align*}
	and
	\begin{align*}
		\limsup_{k \to \infty} \Big(\max\big(|1-\gamma\mu|,|1-\gamma L|\big)+\eps\Big)^{-2k}  (f(\theta_k)-f(\theta_\infty))=0\, .
	\end{align*}
\end{corollary}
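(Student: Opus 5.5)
The corollary is the special case $\sigma=0$, $D_k\equiv 0$ of Theorem~\ref{theo1}, so the plan is to specialise that theorem and translate its probabilistic conclusion into a deterministic one.

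\textbf{Checking the hypotheses.} For $\sigma=0$ the step-size condition \eqref{eq:step_size_cond_main_thm} reads $0<\gamma<2/L$, which is exactly the assumption here. A $(\mu,L)$-regular point is automatically $(\mu,L,0)$-regular: with $D_k\equiv 0$, condition (A4) holds trivially because $\E[\|D_k\|^2\mid\cF_{k-1}]=0\le 0$. We may take the trivial filtration, so the scheme is a (deterministic) SGD scheme in the sense of \eqref{eq:SGD}.

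\textbf{Identifying the rate.} With $\sigma=0$ we get $m(\gamma)=\max\big((1-\gamma\mu)^2,(1-\gamma L)^2\big)=\rho^2$, where $\rho:=\max(|1-\gamma\mu|,|1-\gamma L|)$.

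\textbf{Removing the probability.} Apply Theorem~\ref{theo1} with any fixed $\delta\in(0,1)$, say $\delta=1/2$, to obtain a neighborhood $V$ and an event $\IC$. The underlying probability space carries no randomness, so we may take it to be a one-point space. Then $\IV$ is either empty or the whole space.
If some $\theta_N\in V$, then $\P(\IV)=1$, and hence $\P(\IC\cap\IV)\ge 1-\delta>0$. On a one-point space this forces $\IC\cap\IV$ to be the whole space. Consequently every conclusion that holds almost surely on $\IC\cap\IV$ holds outright: convergence of $\theta_k$ to a local minimum $\theta_\infty$ with $f(\theta_\infty)=f(\theta^\ast)$, together with \eqref{eq:rate1} and \eqref{eq:mgamma} for every $\eps>0$.

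\textbf{Matching the form of the rate.} The theorem gives decay at rate $(\rho^2+\eps)^{-k}$, while the corollary states it as $(\rho+\eps)^{-2k}$. Since $(\rho+\eps)^2\ge\rho^2+\eps'$ with $\eps'=2\rho\eps+\eps^2>0$, we have
\[
(\rho+\eps)^{-2k}\,\|\theta_k-\theta_\infty\|^2\le(\rho^2+\eps')^{-k}\,\|\theta_k-\theta_\infty\|^2\to 0,
\]
and likewise for $f(\theta_k)-f(\theta_\infty)$. This is the claimed statement.

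\textbf{Main obstacle.} The only delicate point is this translation of the probabilistic statement into the deterministic one. It reduces to the observation that on a degenerate probability space any event of positive probability is the sure event.
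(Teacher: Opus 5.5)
Your proposal is correct and follows the paper's own route: the paper obtains the corollary "immediately" by setting $\sigma=0$ (with $D_k\equiv 0$) in Theorem~\ref{theo1}. You also spell out two points the paper leaves implicit, namely that an event of positive probability in the deterministic setting forces the conclusions to hold outright, and that $(\rho+\eps)^{2}=\rho^2+\eps'$ with $\eps'=2\rho\eps+\eps^2>0$, which converts the theorem's rate $m(\gamma)=\rho^2$ into the stated form.
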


The starting point for our analysis is the geometric perspective on the PL-inequality developed by Rebjock and Boumal in their remarkable work~\cite{rebjock2023fast}, which builds on an earlier result of Feehan~\cite{feehan2020morse}. They show that if $f \in C^2$ satisfies a local PL-inequality, the set of minima forms a $C^1$-manifold and the eigenvalues of the Hessian in the normal directions are lower bounded by the PL-constant. More precisely, their result is as follows.

\begin{proposition}[See Theorem~2.16 and Corollary~2.17 in~\cite{rebjock2023fast}] \label{thm:rebjock} 
    Let $\theta^\ast \in \R^d$ be a local minimum and $U\subset \R^d$ be a neighborhood of $\theta^\ast$ such that $f\big|_U$ is $C^k$, for a $k \ge 2$, and satisfies the PL-inequality \eqref{eq:PL} with constant $\mu >0$. 
    Then, there exists a neighborhood $U'$ of $\theta^\ast$ such that the set $\mathcal M :=\{\theta \in \R^d: \nabla f(\theta)=0\} \cap U'$ 
	is a $C^{k-1}$-submanifold of $\R^d$ and the Hessian satisfies
	\begin{align} \label{eq:MB}
        \ker(\Hess f(\theta^\ast)) = \cT_{\theta^\ast} \mathcal M \quad \text{ and } \quad 
		\langle v, \Hess f(\theta^\ast) \,  v \rangle \ge \mu \|v\|^2 \quad \text{ for all } v \in \cN_{\theta^\ast} \mathcal M.
	\end{align}
	Here, $\cT_{\theta^\ast}  \mathcal M$ denotes the tangent space of $\mathcal M$ at $x$ and $\cN_{\theta^\ast}  \mathcal M := (\cT_{\theta^\ast}  \mathcal M )^\perp$ denotes the normal space of $\mathcal M$ at ${\theta^\ast}$.
\end{proposition}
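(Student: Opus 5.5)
The statement to prove is Proposition~\ref{thm:rebjock}, which is cited from \cite{rebjock2023fast}. Still, here is how I would prove it directly.

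\emph{Step 1: quadratic growth.} First I would show that the PL-inequality on $U$ implies quadratic growth near $\theta^\ast$. The standard gradient-flow argument does this. Along the flow $\dot x=-\nabla f(x)$ started near $\theta^\ast$, the function $g=\sqrt{f-f(\theta^\ast)}$ satisfies $\frac{d}{dt}g\le -\sqrt{\mu/2}\,\|\dot x\|$. Two consequences follow: the trajectory has length at most $\sqrt{2/\mu}\,g(x_0)$, so it stays in $U$ when it starts close enough to $\theta^\ast$; and it converges to some point of $S:=\{f=f(\theta^\ast)\}\cap U$. This yields
$$
f(\theta)-f(\theta^\ast)\ge \tfrac{\mu}{2}\operatorname{dist}(\theta,S)^2
$$
in a neighborhood of $\theta^\ast$. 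By (A1) and (PL), the zero set of $\nabla f$ near $\theta^\ast$ coincides with $S$.

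\emph{Step 2: Hessian bound.} Next I would prove the Hessian statement at every point $\bar\theta\in S$ near $\theta^\ast$, with $H=\Hess f(\bar\theta)$. Since $\bar\theta$ is a local minimum, $H\succeq 0$. Let $v$ be a unit eigenvector of $H$ with eigenvalue $\lambda>0$, and put $\theta=\bar\theta+tv$. A Taylor expansion gives $\nabla f(\theta)=t\lambda v+o(t)$ and $f(\theta)-f(\theta^\ast)=\tfrac12 t^2\lambda+o(t^2)$. Inserting these into (PL) gives $t^2\lambda^2\ge \mu t^2\lambda+o(t^2)$, hence $\lambda\ge\mu$. So every nonzero eigenvalue of $H$ is at least $\mu$, which gives a uniform spectral gap.

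\emph{Step 3: manifold structure.} Let $r$ be the rank of $\Hess f(\theta^\ast)$, and let $P$ be the orthogonal projection onto its range. Consider the map $F(\theta)=P\nabla f(\theta)$, viewed as a map into $\operatorname{range}(P)\cong\R^r$. It is $C^{k-1}$, and its differential at $\theta^\ast$ is surjective. By the implicit function theorem, $Z:=F^{-1}(0)$ is, near $\theta^\ast$, a $C^{k-1}$ submanifold of dimension $d-r$ with tangent space $\ker \Hess f(\theta^\ast)$. Clearly $\{\nabla f=0\}\subset Z$; the real work is the reverse inclusion.

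\emph{Main obstacle: $Z\subset\{\nabla f=0\}$ near $\theta^\ast$.} My first attempt would be a dimension argument. Step 2 together with lower semicontinuity of rank shows that the rank of the Hessian is at least $r$ at critical points near $\theta^\ast$. On the other hand, the gradient-flow limit map $\theta\mapsto\lim_{t\to\infty}x(t)$ retracts a neighborhood onto $S$. If I can show this retraction is Lipschitz, which would follow from the length bound and the quadratic growth of Step 1, then $S$ has dimension at least $d-r$ in a suitable sense. Since $S\subset Z$, this would force $S$ to fill the connected manifold $Z$ locally.

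An alternative, which is Feehan's route, is to restrict $f$ to $Z$. On $Z$ we have $\nabla f\in\ker P$, which is the tangent direction of $Z$ to first order. Applying (PL) to $f|_Z$ and using that $f|_Z$ is flat to high order at $\theta^\ast$ should then show that $f|_Z$ is constant, i.e.\ that $Z\subset S$.

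Once this inclusion is established, $\mathcal M=Z\cap U'$. Then $\cT_{\theta^\ast}\mathcal M=\ker\Hess f(\theta^\ast)$, and the eigenvalue bound of Step 2 on the orthogonal complement gives \eqref{eq:MB}.
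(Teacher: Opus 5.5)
The paper does not prove this proposition; it imports it from Rebjock--Boumal. Judged on its own, your Steps 1--3 are sound: the gradient-flow proof of quadratic growth, the eigenvalue argument $\lambda^2\ge\mu\lambda$, and the implicit function theorem for $Z=\{P\nabla f=0\}$ with $\cT_{\theta^\ast}Z=\ker\Hess f(\theta^\ast)$. But the step you flag as the main obstacle, $Z\subset\{\nabla f=0\}$ near $\theta^\ast$, is the whole content of the manifold claim, and neither of your routes proves it.

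In the first route, the length bound and quadratic growth only control the displacement $\|\pi(\theta)-\theta\|\le\sqrt{2/\mu}\,\sqrt{f(\theta)-f(\theta^\ast)}$ of the flow retraction $\pi$. They give no comparison of $\pi(\theta)$ and $\pi(\theta')$ for nearby starting points, so Lipschitz continuity does not follow; that would need stability of the flow, essentially stable-manifold theory. Even granting it, you would still have to bound the dimension of the fibres of $\pi$ to get $\dim S\ge d-r$. Moreover, a closed full-dimensional subset of the $(d-r)$-manifold $Z$ need not contain a neighbourhood of $\theta^\ast$ in $Z$ (think of a half-space with $\theta^\ast$ on its boundary). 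In the second route, ``flat to high order'' is a heuristic suited to analytic $f$, as in Feehan's setting. For $C^k$ data you only know that $f|_Z$ is flat to second order at $\theta^\ast$; for $k=2$, $Z$ is merely $C^1$, so $f|_Z$ is not even known to be $C^2$ in a chart. You also do not say how (PL) turns this into constancy.

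The missing idea is to play the quadratic growth of Step 1 against second-order flatness of $f$ along $Z$, measured at the nearest minimiser. Put $H:=\Hess f(\theta^\ast)$. Since $D(P\nabla f)(\theta^\ast)=H$ vanishes on $\ker H$, near $\theta^\ast$ the set $Z$ is the graph over $\ker H$ of a $C^1$ map with zero derivative at $\theta^\ast$. Hence $\|P(x-s)\|\le\eta\|x-s\|$ for $x,s\in Z$ near $\theta^\ast$, with $\eta\to0$ as the neighbourhood shrinks. For $x\in Z$ near $\theta^\ast$, choose $s\in S$ with $\|x-s\|=\operatorname{dist}(x,S)\le\|x-\theta^\ast\|$. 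Then $s\in S\subset Z$ and $\nabla f(s)=0$. Using $\langle Hw,w\rangle=\langle HPw,Pw\rangle\le\|H\|\,\|Pw\|^2$, you get
\[
\frac{\mu}{2}\|x-s\|^2\le f(x)-f(\theta^\ast)=\int_0^1(1-t)\big\langle\Hess f(s+t(x-s))(x-s),x-s\big\rangle\,dt\le\frac12\big(\|H\|\eta^2+\omega\big)\|x-s\|^2,
\]
where $\omega$ is the supremum of $\|\Hess f-H\|$ over a small ball around $\theta^\ast$, and the first inequality is Step 1. Once the ball is small enough that $\|H\|\eta^2+\omega<\mu$, this forces $x=s\in S$. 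Hence $\mathcal M=Z$ near $\theta^\ast$ is a $C^{k-1}$-submanifold with $\cT_{\theta^\ast}\mathcal M=\ker H$. Step 2 at $\theta^\ast$ then gives the bound on $\cN_{\theta^\ast}\mathcal M=\operatorname{range} H$. Neither a dimension count nor a PL inequality for $f|_Z$ is needed.
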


This is a strong result, since it allows flattening the set of local minima by a chart and treating the normal and tangent directions separately. In the normal directions, the minima are strongly contracting, whereas in the tangent directions there is no comparable restoring force. Nevertheless, locally, the objective value admits an upper bound that depends only on the distance to the manifold, implying that movements along the tangent directions are asymptotically negligible. Consequently, up to a second-order Taylor approximation, the problem factorizes into tangent directions, which are asymptotically irrelevant, and normal directions, where the objective is essentially a collection of one-dimensional strongly convex quadratics. 

The additional regularity for the set of minima established in \cite{rebjock2023fast} has recently been used in several works to derive fast convergence rates for several optimization methods, including stochastic gradient descent with classical noise assumptions \cite{fehrman2020convergence} and its Ruppert-Polyak average \cite{dereich2023central}, Polyak's heavy ball \cite{kassing2024polyak}, Nesterov accelerated gradient \cite{gupta2024nesterov, feng2026optimal}, and the regularized Newton method \cite{rebjock2023fast}. Moreover, it can be shown that SGD with non-degenerate gradient noise avoids manifolds consisting of strict saddle points; see \cite{mertikopoulos2020almost}. 
Under classical noise assumptions and using a decreasing step-size $\gamma_n \asymp 1/n$, the function value of SGD on objectives satisfying the PL-inequality converges with the rate $\cO(1/n)$. This is exactly the rate of convergence in the law of large numbers determined by the central limit theorem. Thus, in this situation, using the framework of decoupled quadratics can at most improve the rate of convergence by a multiplicative factor. In the overparameterized or deterministic setting considered in this work, however, one can prove an asymptotic speed-up in the exponential convergence rate.

It is worth mentioning that every coercive $C^2$-function that satisfies a \textbf{global} PL-inequality has a unique minimum and grows quadratically in the distance to this minimum, see \cite{criscitiello2025, nejma2025polyak}. This fact significantly restricts the class of functions that satisfy a global PL-inequality. Since the class of functions that contain a local minimum where a \textbf{local} PL-inequality holds is much larger, our result requires an independent geometric analysis of the convergence behavior of (S)GD.

The remainder of the article is organized as follows. In Section~\ref{sec:quadratic}, we study the special case of optimizing a strongly convex quadratic under the overparameterized noise assumption. Moreover, we analyze the resulting convergence rate $m(\gamma)$ (which is also the asymptotic convergence rate in our main result, Theorem~\ref{theo1}), optimize it with respect to the step-size and compare it to the (S)GD rate for $C^{1,1}$-functions that satisfy the PL-inequality derived in~\cite{wojtowytsch2021stochastic}. The remaining sections are devoted to the proof of the main result. In Section~\ref{sec:descent}, we establish a technical descent lemma. In Section~\ref{sec:geometric}, we construct a suitable manifold chart and control certain Taylor approximation errors. Finally, in Section~\ref{sec:proof}, we complete the proof of Theorem~\ref{theo1}.

\section{Convergence rate $m(\gamma)$: Quadratic derivation and properties} \label{sec:quadratic}
\subsection*{Convergence rates for quadratic objectives}
To illustrate the key mechanisms behind our main result, Theorem~\ref{theo1}, we consider the case of optimizing a strongly convex quadratic
$$
    f(\theta) = \frac 12 \langle A\theta, \theta \rangle,
$$
where $A\in \R^{d\times d}$ is a symmetric matrix that satisfies $\spec(A) \subset [\mu, L]$ for $0<\mu\le L$.
In particular, $A$ is positive definite and $\theta^\ast=0$ is the unique minimum. Since  $\nabla f(\theta) = A \theta$, the SGD iteration becomes
\begin{align} \label{eq:2304782711}
 \theta_k =  \theta_{k-1} - \gamma (A  \theta_{k-1} + D_k)=(\Id - \gamma A)\theta_{k-1}-\gamma D_k,
\end{align}
where $\Id$ denotes the identity matrix on $\R^d$.
We assume that $\E[\|\theta_0\|^2]<\infty$ and that there exists $\sigma>0$ such that for all $k \in \N$
\begin{align} \label{eq:00212}
\mathbb{E}[D_k \mid \cF_{k-1}]=0 \quad \text{ and } \quad \E[\|D_k\|^2 \mid \mathcal F_{k-1}] \le \sigma (f(\theta_{k-1})-f(\theta^\ast))= \frac{\sigma}{2} \langle A \theta_{k-1}, \theta_{k-1} \rangle.
\end{align}
In particular, $\theta^\ast=0$ is $(\mu,L,\sigma)$-regular in the sense of our Definition \ref{ass:PL}.
Using \eqref{eq:2304782711} together with \eqref{eq:00212}, yields for all $k \in \N$
\begin{align}
\begin{split}\label{eq:34341089247}
    \E[ \|\theta_k\|^2 \mid \mathcal F_{k-1}] &= \E[  \|(\Id - \gamma A) \theta_{k-1}\|^2 -2 \gamma \langle (\Id - \gamma A) \theta_{k-1}, D_k \rangle + \gamma^2 \|D_k\|^2  \mid \mathcal F_{k-1}] \\
    & \le \|(\Id - \gamma A) \theta_{k-1}\|^2 + \gamma^2  \sigma (f(\theta_{k-1})-f(\theta^\ast))\\
    &= \langle \theta_{k-1},\Big((\Id - \gamma A)^2+\frac{\gamma^2\sigma }{2}A\Big)\theta_{k-1}\rangle \\
    & \le \biggl \|(\Id - \gamma A)^2+\frac{\gamma^2\sigma }{2}A\biggr \| \, \|\theta_{k-1}\|^2 \\
    & = \left(\sup_{\lambda \in \spec(A)}(1 - \gamma \lambda)^2+\frac{\gamma^2\sigma \lambda}{2}\right)\|\theta_{k-1}\|^2\\
    & \le \max\left((1 - \gamma \mu)^2+\frac{\gamma^2\sigma \mu}{2}, (1 - \gamma L)^2+\frac{\gamma^2\sigma L}{2}\right)\|\theta_{k-1}\|^2.
\end{split}
\end{align}
Iterating this inequality gives
\begin{equation}\label{eq:381403048}
    \E [ \|\theta_k\|^2 \mid \cF_0 ]
\le 
m(\gamma)^k
\|\theta_0\|^2
\end{equation}
with
\begin{equation}\label{eq:def_m_quadratic}
    m(\gamma):=\max\left((1 - \gamma \mu)^2+\frac{\gamma^2\sigma \mu}{2}, (1 - \gamma L)^2+\frac{\gamma^2\sigma L}{2}\right).
\end{equation}
Therefore, provided $m(\gamma)<1$, we obtain linear convergence of $\|\theta_k\|^2$ in expectation to $\theta^\ast=0$. Now, fix an $\eps>0$ such that $m(\gamma)+\eps<1$.
Then, by Markov's inequality, for every $\delta>0$,
$$
\mathbb P\Big(\|\theta_k\|^2\ge \delta(m(\gamma)+\eps)^k\Big)
\le
\frac{\E[\|\theta_k\|^2]}{\delta(m(\gamma)+\eps)^k}
\le
\frac{\E[\|\theta_0\|^2]}{\delta}
\left(
\frac{m(\gamma)}
{m(\gamma)+\eps}
\right)^k.
$$
It follows for every $\delta>0$ that
$$
\sum_{k \in \N_0}\mathbb P\Big(\|\theta_k\|^2\ge \delta (m(\gamma)+\eps)^k\Big)<\infty,
$$
so that, by the Borel-Cantelli lemma, 
\begin{equation}\label{eq:rate_quadratic}
    \limsup_{k \to \infty} (m(\gamma)+\eps)^{-k} \|\theta_k\|^2=0, \quad \text{ almost surely}.
\end{equation}
Observe that $m(\gamma)$ given by \eqref{eq:def_m_quadratic} is precisely the asymptotic convergence rate obtained in Theorem~\ref{theo1}. The theorem thus shows that the asymptotic convergence rate in \eqref{eq:rate_quadratic} not only holds for the toy model of purely quadratic objectives but extends locally under suitable assumptions to general $C^2$-objectives satisfying the PL-inequality.

\subsection*{Optimality of $m(\gamma)$} Note that without imposing further assumptions, the rate $m(\gamma)$ given by \eqref{eq:def_m_quadratic} is optimal and cannot be improved. Indeed, consider \eqref{eq:2304782711} with $A$ having minimal and maximal eigenvalue $\mu$ and $L$, respectively. Assume that $D_k=\eta_k A^{1/2}\theta_{k-1}$, where $(\eta_k)_{k \in \N}$ is an i.i.d.\ sequence of univariate, square integrable random variables satisfying $\E[\eta_1]=0$ and $\E[\eta_1^2]=\frac {\sigma}{2}$. Then \eqref{eq:00212} is satisfied for the choice $(\mathcal F_k)_{k \in \N_0} = (\sigma(\eta_1, \dots, \eta_k))_{k \in \N_0}$ (the second inequality being an equality). Moreover, let $\lambda_{\max} \in \{\mu, L\}$ denote a maximizer in $
\max_{\lambda \in \spec(A)} \left\{ (1 - \gamma \lambda)^2 + \frac{\gamma^2 \sigma \lambda}{2} \right\}.
$
Let $\theta_0$ be in the eigenspace of $A$ associated with the eigenvalue $\lambda_{\max}$. Then $\theta_0$ is an eigenvector of $(\Id - \gamma A)^2+\frac{\gamma^2\sigma }{2}A$ with eigenvalue $\lambda_{\max}$. By induction, it follows that all $\theta_k$, $k\in \N$, are eigenvectors of $(\Id - \gamma A)^2+\frac{\gamma^2\sigma }{2}A$ with eigenvalue $\lambda_{\max}$. It thus follows that all inequalities in \eqref{eq:34341089247} are equalities in this set-up, and hence we also have equality in \eqref{eq:381403048}.

\subsection*{Step-size conditions ensuring convergence ($m(\gamma)<1$)}
It follows directly from the definition of $m$ that $m(\gamma)<1$ if and only if
$$
    \max \Big( -2\mu\gamma + \Big(\mu^2 + \frac{\sigma \mu}{2}\Big)\gamma^2, -2L\gamma + \Big(L^2 + \frac{\sigma L}{2}\Big)\gamma^2 \Big) < 0.
$$
This simplifies to 
$$
    \max \Big( \mu \gamma\left(\left(\mu+\frac{\sigma}{2}\right)\gamma-2\right), L\gamma\Big(\Big(L+\frac{\sigma}{2}\Big)\gamma-2\Big) \Big) < 0,
$$
which, using $0<\mu\le L$, is equivalent to 
$$
    0<\gamma<\frac{2}{L+\frac{\sigma}{2}}.
$$
Note that this is precisely the step-size condition we impose in Theorem \ref{theo1}; see \eqref{eq:step_size_cond_main_thm}.

\subsection*{Optimal step-sizes}
We minimize $m(\gamma)$ with respect to $\gamma$. First, note that
$$
(1 - \gamma L)^2+\frac{\gamma^2\sigma L}{2}-(1 - \gamma \mu)^2-\frac{\gamma^2\sigma \mu}{2}
=
\gamma(L-\mu)\Big(\gamma\Big(\frac{\sigma}{2}+L+\mu\Big)-2\Big)
$$
and hence
\begin{align} \label{eq:mgamma1}
m(\gamma)
&= \begin{cases}
(1-\gamma\mu)^2+\frac{\gamma^2\sigma \mu}{2}, & \text{if } 0\le \gamma\le \frac{2}{L+\mu+\frac{\sigma}{2}},\\
(\gamma L-1)^2+ \frac{\gamma^2\sigma L}{2}, & \text{if }\gamma >  \frac{2}{L+\mu+\frac{\sigma}{2}}.
\end{cases}
\end{align}
The minimizer of the first branch is
$\frac{1}{\mu+\frac{\sigma }{2}}$.
Moreover, $m$ is increasing on the second branch. 
Hence, the global minimizer of $m$ is
\begin{equation}\label{eq:opt_step_size_linear_case}
    \gamma^\ast=
\min\left(
\frac{1}{\mu+\frac{\sigma }{2}},
\;
\frac{2}{L+\mu+\frac{\sigma}{2}}
\right)
=
\begin{cases}
\frac{1}{\mu+\frac{\sigma }{2}},
& \text{if } \sigma \ge 2(L-\mu),
\\
\frac{2}{L+\mu+\frac{\sigma}{2}},
& \text{if } \sigma < 2(L-\mu).
\end{cases}
\end{equation}
This leads to
\begin{equation}\label{eq:our_opt_rate}
    m^\ast=m(\gamma^\ast) =     \begin{cases}
\frac{\sigma}{2\left(\mu+\frac{\sigma}{2}\right)},
& \text{if } \sigma \ge 2(L-\mu),\\
\frac{(L-\mu)^2 +\sigma(L+\mu)+ \frac{\sigma^2}{4}}{\left(L+\mu+\frac{\sigma}{2}\right)^2},
& \text{if } \sigma < 2(L-\mu).
\end{cases}
\end{equation}

\subsection*{Comparison to known PL-based rates}
We compare the asymptotic rate $m(\gamma)$ established in Theorem~\ref{theo1} with the non-asymptotic rate $\varphi(\gamma):=1-2\mu\gamma + \gamma^2 \frac{L(2\mu+\sigma)}{2}$ derived in~\cite{wojtowytsch2021stochastic} for $C^{1}$-objectives with Lipschitz-continuous gradient satisfying the PL-inequality. The latter result
provides a finite-time guarantee under weaker smoothness assumptions, while our geometric approach for $C^2$-functions yields a sharper asymptotic rate, as shown next.

First, note that $\varphi(\gamma)<1$ requires $\gamma<\frac{2}{L+ \frac{ \sigma L}{2 \mu} }$. For $\gamma \in (0,\frac{2}{L+\mu+\frac{\sigma}{2}}]$ it holds that
$$
\varphi(\gamma)-m(\gamma)=\gamma^2\Big(\mu+\frac{\sigma}{2}\Big)(L-\mu) \ge 0
$$
and for $\gamma\in(\frac{2}{L+\mu+\frac{\sigma}{2}},\frac{2}{L})$
we have
$$
\varphi(\gamma)-m(\gamma)=\gamma (2-\gamma L)(L-\mu) \ge 0
$$
with strict inequalities if $L>\mu$.
In particular, in that case we have $m(\gamma)<\varphi(\gamma)$ for all $\gamma\in (0,\frac{2}{L})$, so that for a given step-size, the asymptotic rate derived in our main result outperforms the non-asymptotic rate derived in \cite{wojtowytsch2021stochastic}.

Next, we compare the optimal rates. To this end, first note that $\varphi$ is minimized at
$
\frac{2\mu}{L(2\mu+\sigma)}
$
which gives a rate of convergence of
\begin{equation}\label{eq:opt_rate_PL}
  \varphi^\ast= 1-\frac{2\mu^2}{L(2\mu+\sigma)}. 
\end{equation}
We now compare the ratio $\frac{1-m^\ast}{1-\varphi^\ast}$ (see \eqref{eq:our_opt_rate} for the definition of $m^\ast$), which quantifies the relative contraction per iteration. To this end, we introduce the condition number $\kappa=\frac{L}{\mu}\ge 1$. Consider first the low condition regime $\kappa\le 1+\frac{\sigma}{2\mu}$ (i.e. $\sigma\ge 2(L-\mu)$).
In this case, it follows from \eqref{eq:our_opt_rate} and \eqref{eq:opt_rate_PL} that 
$$
\frac{1-m^\ast}{1-\varphi^\ast}=\kappa \in \left(1, 1+\frac{\sigma}{2\mu}\right].
$$
Consider next the high condition regime $\kappa\ge 1+\frac{\sigma}{2\mu}$ (i.e. $\sigma\le  2\mu (\kappa-1)$). In this case, we have 
$$
\frac{1-m^\ast}{1-\varphi^\ast}=\frac{4\kappa^2(1+\frac{\sigma}{2\mu})}{(\kappa+1+\frac{\sigma}{2\mu})^2}\in \left[1+\frac{\sigma}{2\mu} , 4\big(1+\frac{\sigma}{2\mu}\big)\right).
$$
In particular, we obtain in the ill-conditioned limit
$$
\lim_{\kappa \to \infty}\frac{1-m^\ast}{1-\varphi^\ast}=4\left(1+\frac{\sigma}{2\mu}\right).
$$
In conclusion, we thus establish an asymptotic speed-up in the convergence of SGD on $C^2$-functions under a local PL-inequality that is strictly increasing in the condition number $\kappa$ and approaches the maximal level of $4\left(1+\frac{\sigma}{2\mu}\right)$ in the ill-conditioned limit.

\section{A Descent Lemma in Transformed Coordinates} \label{sec:descent}
In this section, we study the SGD scheme
\begin{align} \label{eq:SGD2}
    \theta_k =\theta_{k-1} - \gamma(\nabla f(\theta_{k-1})+D_k), \qquad k\in \N,
\end{align}
in local coordinates induced by a chart $\Phi: U \to \Phi(U) \subset \R^d$, where $U \subset \R^d$ is an open set. We restrict our analysis to the event that $(\theta_k)_{k \in \N_0}$ stays in a small subset of the domain $U$ from some iteration $N\in \N$ onward. A first-order Taylor expansion yields
\begin{align*}
    \Phi(\theta_k) &= \Phi(\theta_{k-1}- \gamma(\nabla f(\theta_{k-1})+D_k)) \approx \Phi(\theta_{k-1}) - \gamma D\Phi(\theta_{k-1}) (\nabla f(\theta_{k-1})+D_k).
\end{align*}
Thus, in the coordinates induced by $\Phi$, the SGD dynamics are approximately governed by $D\Phi \cdot \nabla f$, which is the push-forward of the vector field $\nabla f$ under $\Phi$.

We derive a descent lemma for the distance between $\Phi(\theta_k)$ and the linear subspace $\R^{d_{\cT}} \times \{0\}^{d_{\cN}}$ which contains the image of the local minima of $f$ under $\Phi$. Near critical points, certain error terms vanish, and iterating the statement of the following lemma will yield a convergence rate that asymptotically matches the one obtained in the quadratic setting in Section~\ref{sec:quadratic}. We will substantiate this claim in the following sections.

Let $\proj_{\cT}: \R^d \to \R^{d_{\cT}} \times \{0\}^{d_{\cN}}$ denote the orthogonal projection onto $\R^{d_{\cT}} \times \{0\}^{d_{\cN}}$, and let $\proj_{\cN}: \R^d \to \{0\}^{d_{\cT}} \times \R^{d_{\cN}}$ denote the orthogonal projection onto $\{0\}^{d_{\cT}} \times \R^{d_{\cN}}$.

\begin{lemma} \label{prop:rate}
Let $U \subset \R^d$ be an open set, let $\Phi:U \to \R^d$ be a $C^1$-diffeomorphism, let $f:\R^d \to \R$ be continuously differentiable, and let $(\theta_k)_{k \in \N}$ be the SGD scheme defined in \eqref{eq:SGD2} .
    Assume that there exist constants $C_{\Phi,1}, C_{\Phi,2}, C_{f,1}, C_{f,2}, L \ge 0$ and a family $(A(\theta))_{ \theta \in U}$ of matrices in $\R^{d \times d}$ such that for all $\theta, \theta' \in U$
    \begin{itemize}
        \item[(i)]
        $(\nabla f)^{-1}(\{0\})\cap U =  \Phi^{-1}((\R^{d_{\cT}} \times \{0\}^{d_{\cN}} )\cap \Phi(U))$,
        \item [(ii)] $\|D \Phi(\theta)-D\Phi(\theta')\| \le C_{\Phi,1}$, 
        \item[(iii)] $\|D\Phi(\theta)\|\le C_{\Phi,2}$,
        \item[(iv)] $\|D\Phi(\theta)\nabla f(\theta)-A(\theta) \proj_{\cN}(\Phi(\theta)) \| \le C_{f,1} \|\proj_{\cN}(\Phi(\theta))\|$ and $\|A(\theta)\| \le L$, and
        \item[(v)] $\|\nabla f(\theta)\| \le C_{f,2}\|\proj_{\cN}(\Phi(\theta))\|$.
    \end{itemize}
    Moreover, let $N \in \N_0$, $C_{D,1}, \delta >0$, and let $\tilde U\subset U$ be an open set such that $\bigcup_{\theta \in \tilde U} \overline{B_{2\delta}(\theta)} \subset U$ and $\|\nabla f\|_{\infty,\tilde U}\le \delta/\gamma$.
    Let $\IB_{N}:= \{\theta_N \in \tilde U\}$, and, for $k > N$, let 
    $$
        \IB_k := \bigcap_{i=N, \dots, k-1} \{\theta_i \in \tilde U\} \cap \bigcap_{i=N,\dots, k}\{\|D_i\| \le \delta/\gamma \},
    $$
    and assume that for all $k > N$ one has almost surely on $\{\theta_{k-1} \in \tilde U\}$ that $\E[D_k \mid \mathcal F_{k-1}]=0$ and 
    \begin{enumerate}
        \item[(vi)] $
        \E[\|D_k\|^2 \mid \mathcal F_{k-1}] \le \frac {\sigma}{2} \langle A(\theta_{k-1}) \proj_{\cN}(\Phi(\theta_{k-1})), \proj_{\cN}(\Phi(\theta_{k-1}))  \rangle 
        + C_{D,1} \|\proj_{\cN}(\Phi(\theta_{k-1}))\|^2.
        $
    \end{enumerate}
    \medskip
    
    Then, for all $k > N$
    \begin{align*}
    \E[&\1_{\IB_k}\|\proj_{\cN}(\Phi(\theta_k))\|^2 \mid \cF_{k-1}] \\
    \le & \1_{\IB_{k-1}} \Big( \|(\Id-\gamma A(\theta_{k-1})) \proj_{\cN}(\Phi(\theta_{k-1}))\|^2 + C_{\Phi,2}^2 \frac{\gamma^2 \sigma}{2} \langle A(\theta_{k-1}) \proj_{\cN}(\Phi(\theta_{k-1})), \proj_{\cN}(\Phi(\theta_{k-1})) \rangle\Big) \\
        &+ \1_{\IB_{k-1}}\|\proj_{\cN}(\Phi(\theta_{k-1}))\|^2 \Bigg( C_{\Phi,2}^2 C_{D,1} \frac{\gamma^2 \sigma}{2} + 2 C_{\Phi,1} \gamma \Big( C_{f,2} + C_{D,2} +  \frac{\gamma}{2} C_{\Phi,2} ( C_{f,2}+ C_{D,2} )^2 \Big)  \\
        & + 2 \gamma C_{f,1} \Big( 1+\frac{\gamma}{2} C_{\Phi,2} (C_{f,2}+C_{D,2})  \Big) +\gamma^2  L \Big(\frac{\gamma}{\delta}C_{\Phi,2} R C_{D,2}^2 + C_{f,1}   + C_{\Phi,1} C_{f,2}+ C_{\Phi,1} C_{D,2}\Big)   \\
        & + \gamma^2 \Big( C_{\Phi,1} C_{\Phi,2} (C_{f,2}C_{D,2}+C_{D,2}^2) + \frac{C_{\Phi,2}}{\delta}  C_{D,2}^2 R ( 2 + \gamma C_{\Phi,2}C_{f,2} ) \Big) \Bigg),
    \end{align*}
    with $R:= \sup_{\theta \in \tilde U} \|\proj_{\cN}(\Phi(\theta))\|$ and $C_{D,2}^2:= L \frac{\sigma}{2} + C_{D,1}$.
\end{lemma}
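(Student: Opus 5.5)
We work on $\IB_k$ and abbreviate $x:=\Phi(\theta_{k-1})$, $x^\cN:=\proj_{\cN}(x)$, $A:=A(\theta_{k-1})$, $g:=\nabla f(\theta_{k-1})$ and $h:=-\gamma(g+D_k)$, so that $\theta_k=\theta_{k-1}+h$. First, $\IB_k\subset\IB_{k-1}$ and $\IB_{k-1}\in\cF_{k-1}$, hence $\1_{\IB_k}\le \1_{\IB_{k-1}}\1_{\{\|D_k\|\le\delta/\gamma\}}$. On this event $\|h\|\le \gamma\|g\|+\gamma\|D_k\|\le 2\delta$. Since $\theta_{k-1}\in\tilde U$, the whole segment $[\theta_{k-1},\theta_k]$ lies in $\overline{B_{2\delta}(\theta_{k-1})}\subset U$. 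We then apply the mean value theorem in integral form:
$$\Phi(\theta_k)=x+D\Phi(\theta_{k-1})h+r_k,\qquad r_k=\int_0^1\big(D\Phi(\theta_{k-1}+th)-D\Phi(\theta_{k-1})\big)h\,dt .$$
By (ii), $\|r_k\|\le C_{\Phi,1}\|h\|$. By (iv) we write $D\Phi(\theta_{k-1})g=Ax^\cN+e_k$ with $\|e_k\|\le C_{f,1}\|x^\cN\|$. Projecting onto the normal part and using that $\proj_\cN$ is a contraction gives
$$\proj_\cN\Phi(\theta_k)=(\Id-\gamma A)x^\cN-\gamma\proj_\cN D\Phi(\theta_{k-1})D_k-\gamma\proj_\cN e_k+\proj_\cN r_k .$$
(Implicitly $A$ maps into the normal block, or else we keep $\proj_\cN$ in front, which costs nothing since the estimates only use norms.)

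Next we expand the square. The main term is $\|(\Id-\gamma A)x^\cN\|^2$. The term $\gamma^2\|D\Phi D_k\|^2$ has conditional expectation at most $\gamma^2C_{\Phi,2}^2\E[\|D_k\|^2\mid\cF_{k-1}]$. By (vi) this produces $C_{\Phi,2}^2\frac{\gamma^2\sigma}{2}\langle Ax^\cN,x^\cN\rangle$ plus $C_{\Phi,2}^2C_{D,1}\gamma^2\|x^\cN\|^2$. We also record the consequence $\E[\|D_k\|^2\mid\cF_{k-1}]\le C_{D,2}^2\|x^\cN\|^2$, which follows from $\|A\|\le L$. Combined with (v) this gives $\|h\|\le\gamma(C_{f,2}\|x^\cN\|+\|D_k\|)$, and all remaining terms are controlled in $L^2$ by multiples of $\|x^\cN\|$.

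The cross terms are handled by type.
\begin{enumerate}
\item The cross term between $(\Id-\gamma A)x^\cN$ and the linear noise $D\Phi D_k$ has zero conditional mean. The truncation $\1_{\{\|D_k\|\le\delta/\gamma\}}$ destroys exact centering, so we use $\E[D_k\1_{\{\|D_k\|\le \delta/\gamma\}}\mid\cF_{k-1}]=-\E[D_k\1_{\{\|D_k\|>\delta/\gamma\}}\mid\cF_{k-1}]$. Its norm is bounded by $\frac{\gamma}{\delta}\E[\|D_k\|^2\mid\cF_{k-1}]\le\frac{\gamma}{\delta}C_{D,2}^2\|x^\cN\|^2$. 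Pairing this with $\|(\Id-\gamma A)x^\cN\|\le(1+\gamma L)\|x^\cN\|$ and $\|x^\cN\|\le R$ yields exactly the terms $\frac{\gamma}{\delta}C_{\Phi,2}RC_{D,2}^2$ that appear in the bound. Pairing the noise with the other remainder terms gives the analogous terms with factor $(2+\gamma C_{\Phi,2}C_{f,2})$.
\item The terms involving $e_k$ give $2\gamma C_{f,1}\|x^\cN\|\cdot\|\cdots\|$.
\item The terms involving $r_k$ give $2C_{\Phi,1}\gamma(C_{f,2}+C_{D,2})\|x^\cN\|^2$. We use Cauchy--Schwarz and Jensen, $\E\|D_k\|\le C_{D,2}\|x^\cN\|$, to reduce every quantity to $\|x^\cN\|^2$.
\end{enumerate}
Throughout we drop the indicator $\1_{\{\|D_k\|\le\delta/\gamma\}}$ wherever the term is nonnegative, replacing $\1_{\IB_k}$ by $\1_{\IB_{k-1}}$.

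The main obstacle is the bookkeeping needed to land on exactly the displayed constants, and above all the truncation bias in the martingale cross terms. The stated estimate is only an upper bound, so the plan is to bound each product generously, for instance $\|r_k\|^2\le C_{\Phi,1}\|h\|\cdot C_{\Phi,2}\|h\|$ via (ii)–(iii), which explains the mixed $C_{\Phi,1}C_{\Phi,2}$ terms. We then collect all contributions. If the constants do not match exactly, any bound of the same form suffices for later use.
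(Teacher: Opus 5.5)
Your argument is correct in substance, but it is organized differently from the paper's.

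\textbf{How the two routes differ.} The paper never expands $\Phi$ itself. It applies the fundamental theorem of calculus to the scalar function $\theta\mapsto\|\proj_{\cN}(\Phi(\theta))\|^2$ along $s\mapsto\theta_{k-1}-s(\nabla f(\theta_{k-1})+D_k)$, whose derivative is $2D\Phi(\theta)^\top\proj_{\cN}(\Phi(\theta))$. It splits the result into an oscillation integral (controlled by (ii)), a drift integral and a noise integral. It then re-expands $\proj_{\cN}(\Phi(\theta_{k-1}(s)))$ inside the last two to recover the quadratic terms $\gamma^2\|Ax^\cN\|^2$ and $\gamma^2\|D\Phi(\theta_{k-1})D_k\|^2$ via $\int_0^\gamma s\,ds=\gamma^2/2$.

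You instead make one vector-valued expansion $\Phi(\theta_k)=x+D\Phi(\theta_{k-1})h+r_k$. Then $\proj_{\cN}(\Phi(\theta_k))=\proj_{\cN}(v)$ with $v=(\Id-\gamma A)x^\cN-\gamma e_k-\gamma D\Phi(\theta_{k-1})D_k+r_k$. You use $\|\proj_{\cN}(v)\|\le\|v\|$, which is the clean way to avoid assuming $A$ maps into the normal block, and expand $\|v\|^2$ algebraically. This shows the perturbed quadratic recursion of Section~\ref{sec:quadratic} directly and makes the bookkeeping transparent:
\begin{itemize}
\item Only the pairings of $D\Phi(\theta_{k-1})D_k$ with the $\cF_{k-1}$-measurable vectors $(\Id-\gamma A)x^\cN$ and $e_k$ use the truncation identity and the bound $\frac{\gamma}{\delta}\E[\|D_k\|^2\mid\cF_{k-1}]$. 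These give the $R/\delta$ terms.
\item Every term containing $r_k$, which depends on $D_k$, has to be handled by Cauchy--Schwarz with $\E[\|D_k\|\mid\cF_{k-1}]\le C_{D,2}\|x^\cN\|$. Your sentence about ``the other remainder terms'' with factor $(2+\gamma C_{\Phi,2}C_{f,2})$ blurs this distinction.
\end{itemize}
The ingredients are the same as the paper's. The price of your route is different, in places cruder, constants: for example $(1+\gamma L)$ where the paper gets $(1+\frac{\gamma L}{2})$, and an extra $\gamma^2C_{f,1}^2$ term.

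\textbf{The constants.} The mismatch is harmless, and your hedge is justified. Lemma~\ref{lem:1} only needs every term other than $\|(\Id-\gamma A)x^\cN\|^2+C_{\Phi,2}^2\frac{\gamma^2\sigma}{2}\langle Ax^\cN,x^\cN\rangle$ to carry a factor $C_{\Phi,1}$, $C_{f,1}$, $C_{D,1}$ or $R/\delta$, and all of yours do. Your coefficient $\gamma^2C_{\Phi,2}^2C_{D,1}$ is in fact the correct one. The displayed $C_{\Phi,2}^2C_{D,1}\frac{\gamma^2\sigma}{2}$ comes from a slip in \eqref{eq:342313c}, where $\frac{\gamma^2}{2}C_{\Phi,2}^2C_{D,1}$ was written as $\frac{\gamma^2\sigma}{4}C_{\Phi,2}^2C_{D,1}$. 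For $\sigma=0$ the displayed bound actually fails. Take $d=d_{\cN}=1$, $\Phi=\Id$, $f(\theta)=\frac a2\theta^2$, $D_k=\pm c\,\theta_{k-1}$ with equal probabilities, and $\delta$ large compared to $R$. Then the variance $\gamma^2c^2\theta_{k-1}^2$ on the left is matched only by the $R/\delta$ terms on the right, which can be made arbitrarily small.

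\textbf{Two small repairs.}
\begin{itemize}
\item Since (vi) and $\E[D_k\mid\cF_{k-1}]=0$ are assumed only on $\{\theta_{k-1}\in\tilde U\}$, replace $\1_{\IB_k}$ by $\1_{\IB_{k-1}\cap\{\theta_{k-1}\in\tilde U\}}$ (the paper's auxiliary event $\IA_{k-1}$) before conditioning, not by $\1_{\IB_{k-1}}$. Enlarge the indicator only at the end.
\item You only have $\|r_k\|\le 2C_{\Phi,2}\|h\|$, not $C_{\Phi,2}\|h\|$. Alternatively, use $\|r_k\|^2\le C_{\Phi,1}^2\|h\|^2$ directly.
\end{itemize}
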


\begin{proof} 
    We introduce the auxiliary events $\IA_{N}:= \{\theta_N \in \tilde U\}$ 
    $$
        \IA_k := \{\theta_N \in \tilde U\} \cap \Big( \bigcap_{i=N+1, \dots, k} \big(\{ \theta_i \in \tilde U \} \cap \{\| D_{i}\| \le \delta/\gamma\}\big)\Big) \qquad (k > N),
    $$
    so that $\IB_{k}:= \IA_{k-1} \cap \{\|D_{k}\|\le \delta/\gamma\}$ for all $k>N$.
    
    Let $k >N$. For $s \in [0, \gamma]$ define $\theta_{k-1}(s)= \theta_{k-1}-s(\nabla f(\theta_{k-1})+D_k)$ and note that, on the event $\IB_k$, one has $\theta_{k-1} \in \tilde U$ and, thus,
    $$
        d(\theta_{k-1}(s), \tilde U) \le \|\theta_{k-1}(s)-\theta_{k-1}\|\le \gamma (\|\nabla f(\theta_{k-1})\|+ \|D_k\|) \le 2 \delta 
    $$
    for all $s \in [0,\gamma]$. We conclude that $\theta_{k-1}(s) \in U$ for all $s \in [0,\gamma]$ and, in particular, $\theta_k \in U$ on $\IB_k$.

    For $\theta \in U$, we denote $\Phi_{\cN}(\theta):=\proj_{\cN}(\Phi(\theta))$ and consider the function $\theta \mapsto \|\Phi_{\cN}(\theta)\|^2$, which is the squared distance of $\Phi(\theta)$ to the subspace $\R^{d_{\cT}} \times \{0\}^{d_{\cN}}$ and has the derivative
    $$
        \xi(\theta) = 2 D\Phi(\theta )^\top \Phi_{\cN}(\theta).
    $$
    Thus, for all $k \in \N$, we can decompose $\|\Phi_{\cN}(\theta_k)\|^2$ on $\IB_k$ as follows
    \begin{align}\label{eq:3242355345}
    \begin{split}
        &\|\Phi_{\cN}(\theta_k)\|^2 = \|\Phi_{\cN}(\theta_{k-1})\|^2 - \int_0^\gamma \langle \xi(\theta_{k-1}(s)), \nabla f(\theta_{k-1}) + D_k \rangle \, ds \\
        &= \|\Phi_{\cN}(\theta_{k-1})\|^2- 2\int_0^\gamma \langle  (D\Phi(\theta_{k-1}(s) )-D\Phi(\theta_{k-1}))^\top \Phi_{\cN}(\theta_{k-1}(s)), \nabla f(\theta_{k-1}) + D_k \rangle \, ds \\
        &\quad - 2\int_0^\gamma \langle   \Phi_{\cN}(\theta_{k-1}(s)), D\Phi(\theta_{k-1})\nabla f(\theta_{k-1})\rangle \, ds
        - 2\int_0^\gamma \langle  \Phi_{\cN}(\theta_{k-1}(s)), D\Phi(\theta_{k-1}) D_k \rangle \, ds.
        \end{split}
    \end{align}
 In the following, we estimate the expectation of the three integrals in \eqref{eq:3242355345} on $\IB_k$ separately.
 
    First note that on $\IB_k$ we have
    \begin{align}\label{eq:239846871}
        \| (D\Phi(\theta_{k-1}(s))- D\Phi(\theta_{k-1}))^\top \Phi_{\cN}(\theta_{k-1}(s))\| &\le C_{\Phi,1} \| \Phi_{\cN}(\theta_{k-1}(s))\| 
    \end{align}
    and
    \begin{align} \begin{split} \label{eq:239846872}
        &\|\Phi_{\cN}(\theta_{k-1}(s))\| = \|\Phi_{\cN}(\theta_{k-1}-s(\nabla f(\theta_{k-1})+D_k))\| \\
        & = \Big\|\proj_{\cN}\Big(\Phi(\theta_{k-1})-\int_0^s D\Phi(\theta_{k-1}(u))(\nabla f(\theta_{k-1})+D_k) \, du \Big) \Big\| \\
        & \le \|\Phi_{\cN}(\theta_{k-1})\| + s C_{\Phi,2} (\|\nabla f(\theta_{k-1})\| +\|D_k\|).
        \end{split}
    \end{align}
    Combining \eqref{eq:239846871} and \eqref{eq:239846872} yields
    \begin{align*}
        &\E \Big[ \1_{\IB_k} \Big \| \int_0^\gamma \langle (D\Phi(\theta_{k-1}(s))- D\Phi(\theta_{k-1}))^\top \Phi_{\cN}(\theta_{k-1}(s)) , \nabla f(\theta_{k-1}) + D_k \rangle \, ds \Big\| \mid \cF_{k-1} \Big] \\
        \le & \E \Big[ \1_{\IB_k}   C_{\Phi,1} \int_0^\gamma \Big(\| \Phi_{\cN}(\theta_{k-1})\|+ s C_{\Phi,2} (\|\nabla f(\theta_{k-1})\| +  \|D_k\|) \Big)  \, \Big( \| \nabla f(\theta_{k-1})\| + \|D_k \|\Big) \, ds  \mid \cF_{k-1} \Big] \\
        \le & \E \Big[ \1_{\IB_k}   C_{\Phi,1} \gamma \Big(\| \Phi_{\cN}(\theta_{k-1})\|+ \frac{\gamma}{2} C_{\Phi,2} (C_{f,2}\|\Phi_{\cN}(\theta_{k-1})\| +  \|D_k\|) \Big)  \, \Big( C_{f,2}\|\Phi_{\cN}(\theta_{k-1})\| + \|D_k \|\Big)  \mid \cF_{k-1} \Big].
        \end{align*}
        Using $\IB_{k} \subset \IA_{k-1}$, and the fact that 
    \begin{align*}
        &\E[\1_{\IA_{k-1}} \|D_k\| \mid \mathcal F_{k-1}] \le \E[\1_{\IA_{k-1}} \|D_k\|^2 \mid \mathcal F_{k-1}]^{1/2} \\
        &\le \1_{\IA_{k-1}} \Big( \frac {\sigma}{2} \langle A(\theta_{k-1}) \Phi_{\cN}(\theta_{k-1}), \Phi_{\cN}(\theta_{k-1}) \rangle+C_{D,1} \|\Phi_{\cN}(\theta_{k-1})\|^2 \Bigr)^{1/2} \le \1_{\IA_{k-1}} C_{D,2} \|\Phi_{\cN}(\theta_{k-1})\|
    \end{align*}
    for $C_{D,2}^2:= L \frac{\sigma}{2}+C_{D,1}$ we thus obtain
        \begin{align}\label{eq:342313a}
        \begin{split}
        &\E \Big[ \1_{\IB_k} \Big \| \int_0^\gamma \langle (D\Phi(\theta_{k-1}(s))- D\Phi(\theta_{k-1}))^\top \Phi_{\cN}(\theta_{k-1}(s)) , \nabla f(\theta_{k-1}) + D_k \rangle \, ds \Big\| \mid \cF_{k-1} \Big]\\
        \le & \1_{\IA_{k-1}} \Big( C_{\Phi,1} \gamma \Big( C_{f,2} + C_{D,2} + \frac{\gamma}{2} C_{\Phi,2} ( C_{f,2}+ C_{D,2}  )^2 \Big)  \|\Phi_{\cN}(\theta_{k-1})\|^2\Big).
        \end{split}
    \end{align}

To estimate the second integral in \eqref{eq:3242355345},
   first note that for all $s \in [0, \gamma]$ we have on $\IB_k$
    \begin{align} \begin{split} \label{eq:9921}
        &\langle \Phi_{\cN}(\theta_{k-1}(s)), D\Phi(\theta_{k-1}) \nabla f(\theta_{k-1}) \rangle \\
        \ge & \langle \Phi_{\cN}(\theta_{k-1}(s)), A(\theta_{k-1}) \Phi_{\cN}(\theta_{k-1}) \rangle - C_{f,1} \|\Phi_{\cN}(\theta_{k-1})\| \, \|\Phi_{\cN}(\theta_{k-1}(s))\| \\
        \ge & \langle \Phi_{\cN}(\theta_{k-1}(s)), A(\theta_{k-1}) \Phi_{\cN}(\theta_{k-1}) \rangle \\
        &- C_{f,1} \|\Phi_{\cN}(\theta_{k-1})\| \, \big(\|\Phi_{\cN}(\theta_{k-1})\| + s C_{\Phi,2} \big(\|\nabla f(\theta_{k-1})\| +\|D_k\|\big) \big),
        \end{split}
    \end{align}
    where we used \eqref{eq:239846872} in the last inequality.
    Moreover, on $\IB_k$ we have
    \begin{equation*}
        \begin{split}
            &\langle \Phi_{\cN}(\theta_{k-1}(s))-\Phi_{\cN}(\theta_{k-1}) , A(\theta_{k-1}) \Phi_{\cN}(\theta_{k-1}) \rangle\\
            &=
            \langle \proj_{\cN}(\Phi(\theta_{k-1}(s))-\Phi(\theta_{k-1})+ D\Phi(\theta_{k-1})s(\nabla f(\theta_{k-1})+D_k)), A(\theta_{k-1}) \Phi_{\cN}(\theta_{k-1}) \rangle\\
            &\quad - s\langle \proj_{\cN}(D\Phi(\theta_{k-1})(\nabla f(\theta_{k-1})+D_k)), A(\theta_{k-1}) \Phi_{\cN}(\theta_{k-1}) \rangle\\
            &\ge -\|\Phi(\theta_{k-1}(s))-\Phi(\theta_{k-1})+ D\Phi(\theta_{k-1})s(\nabla f(\theta_{k-1})+D_k)\| \, \|A(\theta_{k-1}) \Phi_{\cN}(\theta_{k-1})\|\\
            &\quad - s\langle \proj_{\cN}(D\Phi(\theta_{k-1})\nabla f(\theta_{k-1})), A(\theta_{k-1}) \Phi_{\cN}(\theta_{k-1}) \rangle
            - s\langle \proj_{\cN}(D\Phi(\theta_{k-1})D_k), A(\theta_{k-1}) \Phi_{\cN}(\theta_{k-1}) \rangle\\
            &\ge -C_{\Phi,1} s \|\nabla f(\theta_{k-1})+D_k\| \, \|A(\theta_{k-1}) \Phi_{\cN}(\theta_{k-1})\|\\
            &\quad - s\langle \proj_{\cN}(D\Phi(\theta_{k-1})\nabla f(\theta_{k-1})-A(\theta_{k-1}) \Phi_{\cN}(\theta_{k-1})), A(\theta_{k-1}) \Phi_{\cN}(\theta_{k-1}) \rangle-s\|A(\theta_{k-1}) \Phi_{\cN}(\theta_{k-1})\|^2\\
            &\quad - s\langle \proj_{\cN}(D\Phi(\theta_{k-1})D_k), A(\theta_{k-1}) \Phi_{\cN}(\theta_{k-1}) \rangle\\
            &\ge -C_{\Phi,1} s \|\nabla f(\theta_{k-1})+D_k\| \,  \|A(\theta_{k-1}) \Phi_{\cN}(\theta_{k-1})\| - sC_{f,1}\|\Phi_{\cN}(\theta_{k-1})\| \, \|A(\theta_{k-1}) \Phi_{\cN}(\theta_{k-1})\|\\
            &\quad -s\|A(\theta_{k-1}) \Phi_{\cN}(\theta_{k-1})\|^2- s\langle \proj_{\cN}(D\Phi(\theta_{k-1})D_k), A(\theta_{k-1}) \Phi_{\cN}(\theta_{k-1}) \rangle.
        \end{split}
    \end{equation*}
Taking conditional expectation yields
    \begin{equation}\label{eq:auxiliary1}
    \begin{split}
        &\E[ \1_{\IB_{k}} \langle \Phi_{\cN}(\theta_{k-1}(s))-\Phi_{\cN}(\theta_{k-1}), A(\theta_{k-1}) \Phi_{\cN}(\theta_{k-1}) \rangle \mid \mathcal F_{k-1}] \\
        &\ge -s(C_{\Phi,1} \E[ \1_{\IB_{k}} \|\nabla f(\theta_{k-1})+D_k\|  \mid \mathcal F_{k-1}]+ \1_{\IA_{k-1}}C_{f,1}\|\Phi_{\cN}(\theta_{k-1})\|) \, \|A(\theta_{k-1}) \Phi_{\cN}(\theta_{k-1})\|\\
       & \quad -s\E[ \1_{\IB_{k}} \|A(\theta_{k-1}) \Phi_{\cN}(\theta_{k-1})\|^2+\1_{\IB_{k}} \langle \proj_{\cN}(D\Phi(\theta_{k-1})D_k), A(\theta_{k-1}) \Phi_{\cN}(\theta_{k-1}) \rangle\mid \mathcal F_{k-1}] \\
        &\ge -s\1_{\IA_{k-1}}(C_{\Phi,1}(C_{f,2}+C_{D,2}) + C_{f,1})\|\Phi_{\cN}(\theta_{k-1})\| \, \|A(\theta_{k-1}) \Phi_{\cN}(\theta_{k-1})\|\\
       & \quad -s\E[ \1_{\IB_{k}} \|A(\theta_{k-1}) \Phi_{\cN}(\theta_{k-1})\|^2+\1_{\IB_{k}} \langle \proj_{\cN}(D\Phi(\theta_{k-1})D_k), A(\theta_{k-1}) \Phi_{\cN}(\theta_{k-1}) \rangle\mid \mathcal F_{k-1}]
        \end{split}
    \end{equation}
  Moreover, using $\E[\1_{\IA_{k-1}} D_k\mid \cF_{k-1}]=0$ and $\1_{\IB_{k}}=\1_{\IA_{k-1}}-\1_{\IA_{k-1} \cap \{\|D_k\| > \delta/\gamma\}}$, we obtain
    \begin{align*}
        &\E [ \1_{\IB_k} \langle \proj_{\cN}(D\Phi(\theta_{k-1}) D_k), A(\theta_{k-1}) \Phi_{\cN}(\theta_{k-1}) \rangle \mid \cF_{k-1}] \\
         =& - \E [ \1_{\IA_{k-1}\cap \{\|D_k\| > \delta/\gamma\}} \langle \proj_{\cN}(D\Phi(\theta_{k-1}) D_k), A(\theta_{k-1}) \Phi_{\cN}(\theta_{k-1}) \rangle \mid \cF_{k-1}] \\
         \le &\E [ \1_{\IA_{k-1}\cap \{\|D_k\| > \delta/\gamma\}} \|D\Phi(\theta_{k-1}) D_k\| \mid \cF_{k-1}]\,  \|A(\theta_{k-1}) \Phi_{\cN}(\theta_{k-1})\| \\
         \le &\frac{\gamma}{\delta} C_{\Phi,2} \E [ \1_{\IA_{k-1}} \| D_k\|^2 \mid \cF_{k-1}]\,  \|A(\theta_{k-1}) \Phi_{\cN}(\theta_{k-1})\| \\
         \le &\1_{\IA_{k-1}}  \frac{\gamma}{\delta} C_{\Phi,2} R C_{D,2}^2 \|\Phi_{\cN}(\theta_{k-1})\| \,  \|A (\theta_{k-1}) \Phi_{\cN}(\theta_{k-1})\|.
    \end{align*}
    Combining this with \eqref{eq:auxiliary1} yields for all $s\in [0,\gamma]$
    \begin{align}\begin{split} \label{eq:99726392}
        &\E[ \1_{\IB_{k}} \langle \Phi_{\cN}(\theta_{k-1}(s))-\Phi_{\cN}(\theta_{k-1}), A(\theta_{k-1}) \Phi_{\cN}(\theta_{k-1}) \rangle \mid \mathcal F_{k-1}] \\
        \ge &- s\Big( \E \big[ \1_{\IB_{k}}\|A(\theta_{k-1})\Phi_{\cN}(\theta_{k-1})\|^2 \mid \cF_{k-1} \big]  \\
        &   \quad + \1_{\IA_{k-1}}\|A(\theta_{k-1}) \Phi_{\cN}(\theta_{k-1})\| \, \|\Phi_{\cN}(\theta_{k-1})\| \,  \Big(\frac{\gamma}{\delta}C_{\Phi,2} R C_{D,2}^2 + C_{f,1}   + C_{\Phi,1} C_{f,2}+ C_{\Phi,1} C_{D,2}\Big) \Big).
        \end{split}
    \end{align}

    Now, using \eqref{eq:9921}, \eqref{eq:99726392} and the fact that $\1_{\IA_{k-1}}\|A(\theta_{k-1})\|\le \1_{\IA_{k-1}} L$ gives
    \begin{align}\label{eq:342313b}
    \begin{split}
    &2\E\Big[\1_{\IB_{k}} \int_0^\gamma\langle  \Phi_{\cN}(\theta_{k-1}(s)), D\Phi(\theta_{k-1}) \nabla f(\theta_{k-1}) \rangle\, ds \Big) \mid \mathcal F_{k-1}\Big] \\
        & \ge \E\Big[\1_{\IB_{k}} \int_0^\gamma 2 \langle \Phi_{\cN}(\theta_{k-1}(s)), A(\theta_{k-1}) \Phi_{\cN}(\theta_{k-1}) \rangle \big) \, ds  \mid \mathcal F_{k-1}\Big] \\
        &  \quad - \1_{\IA_{k-1}} 2\gamma C_{f,1}\|\Phi_{\cN}(\theta_{k-1})\| \, \Big(\|\Phi_{\cN}(\theta_{k-1})\| + \frac{\gamma}{2} C_{\Phi,2} \big(\|\nabla f(\theta_{k-1})\| +\E[ \|D_k\| \mid \cF_{k-1}] \big)\Big)\\
        &\ge \E \big[\1_{\IB_{k}} 2\gamma \langle \Phi_{\cN}(\theta_{k-1}), A(\theta_{k-1}) \Phi_{\cN}(\theta_{k-1})  \rangle  - \gamma^2  \1_{\IB_{k}}\|A (\theta_{k-1})\Phi_{\cN}(\theta_{k-1}) \|^2 \mid \cF_{k-1} \big] \\
        &  \quad - \1_{\IA_{k-1}} \gamma^2  L \Big(\frac{\gamma}{\delta}C_{\Phi,2} R C_{D,2}^2 + C_{f,1}   + C_{\Phi,1} C_{f,2}+ C_{\Phi,1} C_{D,2}\Big)  \|\Phi_{\cN}(\theta_{k-1})\|^2 \\
        & \quad  - \1_{\IA_{k-1}} 2 \gamma C_{f,1} \Big( 1+\frac{\gamma}{2}  C_{\Phi,2} (C_{f,2}+C_{D,2})  \Big) \|\Phi_{\cN}(\theta_{k-1})\|^2.
        \end{split}
    \end{align}

    To estimate the third integral in \eqref{eq:3242355345} we note that, similarly to \eqref{eq:239846872}, on the event $\IB_k$ one has
    \begin{equation}\label{eq:auxiliary2}
    \begin{split}
         &  \langle \Phi_{\cN}(\theta_{k-1}(s)), D\Phi(\theta_{k-1})D_k \rangle   \\
         & = \langle \proj_{\cN}(\Phi(\theta_{k-1}) - \int_0^s D\Phi(\theta_{k-1}(u)) (\nabla f(\theta_{k-1})+D_k)  \, du), D\Phi(\theta_{k-1})D_k \rangle \\
         &\ge  \langle \proj_{\cN}(\Phi(\theta_{k-1}) -D\Phi(\theta_{k-1})s(\nabla f(\theta_{k-1})+D_k)), D\Phi(\theta_{k-1})D_k \rangle \\
         & \quad   -  C_{\Phi,1} C_{\Phi,2} s \|\nabla f(\theta_{k-1})+D_k\| \, \|D_k\|.
    \end{split}
    \end{equation}
    Now, by the fact that $\1_{\IB_{k}}=\1_{\IA_{k-1}}-\1_{\IA_{k-1} \cap \{\|D_k\| > \delta/\gamma\}}$ and using $\E[\1_{\IA_{k-1}} D_k\mid \cF_{k-1}]=0$ we obtain
    \begin{align*}
        &\E[ \1_{\IB_{k}}\langle \proj_{\cN}(\Phi(\theta_{k-1}) -D\Phi(\theta_{k-1})s\nabla f(\theta_{k-1})), D\Phi(\theta_{k-1}))D_k \rangle \mid \cF_{k-1}] \\
        & = -\E[ \1_{\IA_{k-1} \cap \{\|D_k\| > \delta/\gamma\}}\langle \proj_{\cN}( \Phi(\theta_{k-1}) -D\Phi(\theta_{k-1})s\nabla f(\theta_{k-1})), D\Phi(\theta_{k-1}))D_k \rangle \mid \cF_{k-1}] \\
        & \ge -\E[ \1_{\IA_{k-1} \cap \{\|D_k\| > \delta/\gamma\}} \| D\Phi(\theta_{k-1}))D_k \| \mid \cF_{k-1}] \, ( 1 + s C_{\Phi,2}C_{f,2} ) \|\Phi_{\cN}(\theta_{k-1})\| \\
        & \ge - \frac{\gamma}{\delta} C_{\Phi,2} \E[ \1_{\IA_{k-1}}  \|D_k \|^2 \mid \cF_{k-1}] \, ( 1 + s C_{\Phi,2}C_{f,2} ) \|\Phi_{\cN}(\theta_{k-1})\| \\
        & \ge - \1_{\IA_{k-1}} \frac{\gamma}{\delta} C_{\Phi,2} C_{D,2}^2 R ( 1 + s C_{\Phi,2}C_{f,2} ) \|\Phi_{\cN}(\theta_{k-1})\|^2,
    \end{align*}
        which together with \eqref{eq:auxiliary2} yields
    \begin{align}\label{eq:342313c}
    \begin{split}
        &\E\Big[\1_{\IB_k} \int_0^\gamma \langle \Phi_{\cN}(\theta_{k-1}(s)), D\Phi(\theta_{k-1})D_k \rangle \, ds \mid \cF_{k-1} \Big] \\
        &\ge  -  \frac{\gamma^2}{2} \E[\1_{\IA_{k-1}} \|D\Phi_{\cN}(\theta_{k-1}) D_k\|^2 \mid \mathcal F_{k-1}] \\
        &- \1_{\IA_{k-1}} \Big( \frac{\gamma^2}{2} C_{\Phi,1}C_{\Phi,2} (C_{f,2}C_{D,2}+C_{D,2}^2) + \frac{\gamma^2}{\delta} C_{\Phi,2} C_{D,2}^2 R \Big( 1 + \frac{\gamma}{2} C_{\Phi,2}C_{f,2} \Big) \Big) \|\Phi_{\cN}(\theta_{k-1})\|^2\\
        &\ge -\1_{\IA_{k-1}} \Big( C_{\Phi,2}^2 \frac{\gamma^2 \sigma}{4} \langle A(\theta_{k-1}) \Phi_{\cN}(\theta_{k-1}), \Phi_{\cN}(\theta_{k-1})  \rangle + C_{\Phi,2}^2 \frac{\gamma^2 \sigma}{4} C_{D,1} \|\Phi_{\cN}(\theta_{k-1})\|^2 \Big) \\
        &- \1_{\IA_{k-1}} \Big( \frac{\gamma^2}{2} C_{\Phi,1} C_{\Phi,2}(C_{f,2}C_{D,2}+C_{D,2}^2) + \frac{\gamma^2}{\delta} C_{\Phi,2} C_{D,2}^2 R \Big( 1 + \frac{\gamma}{2} C_{\Phi,2}C_{f,2} \Big) \Big) \|\Phi_{\cN}(\theta_{k-1})\|^2.
        \end{split}
    \end{align}
    
    In conclusion, we obtain by combining \eqref{eq:3242355345} with \eqref{eq:342313a}, \eqref{eq:342313b} and \eqref{eq:342313c} that 
    \begin{align*}
        &\E[\1_{\IB_k}\|\Phi_{\cN}(\theta_k)\|^2 \mid \cF_{k-1}]\\
        &\le \E \Big[ \1_{\IB_k} \Big( \|\Phi_{\cN}(\theta_{k-1})\|^2 - 2\gamma \langle \Phi_{\cN}(\theta_{k-1}), A(\theta_{k-1}) \Phi_{\cN}(\theta_{k-1})  \rangle  + \gamma^2 \|A(\theta_{k-1}) \Phi_{\cN}(\theta_{k-1})\|^2 \Big) \mid \cF_{k-1} \Big]\\
        & +\1_{\IA_{k-1}} \Big( C_{\Phi,2}^2 \frac{\gamma^2 \sigma}{4} \langle A(\theta_{k-1}) \Phi_{\cN}(\theta_{k-1}), \Phi_{\cN}(\theta_{k-1})  \rangle + C_{\Phi,2}^2 \frac{\gamma^2 \sigma}{4} C_{D,1} \|\Phi_{\cN}(\theta_{k-1})\|^2 \Big) \\
        &+ \1_{\IA_{k-1}}\|\proj_{\cN}(\Phi(\theta_{k-1}))\|^2 \Bigg( 2 C_{\Phi,1} \gamma \Big( C_{f,2} + C_{D,2} +  \frac{\gamma}{2} C_{\Phi,2} ( C_{f,2}+ C_{D,2}  )^2 \Big)  \\
        & + 2 \gamma C_{f,1} \Big( 1+\frac{\gamma}{2} C_{\Phi,2} (C_{f,2}+C_{D,2})  \Big) +\gamma^2  L \Big(\frac{\gamma}{\delta}C_{\Phi,2} R C_{D,2}^2 + C_{f,1}   + C_{\Phi,1} C_{f,2}+ C_{\Phi,1} C_{D,2}\Big)   \\
        & + \gamma^2 \Big( C_{\Phi,1} C_{\Phi,2} (C_{f,2}C_{D,2}+C_{D,2}^2) + \frac{C_{\Phi,2}}{\delta}  C_{D,2}^2 R ( 2 + \gamma C_{\Phi,2}C_{f,2} ) \Big) \Bigg),
    \end{align*}
    which yields the result, since $\IB_k \subset \IA_{k-1}\subset \IB_{k-1}$ and
    \begin{align*}
        &\|\Phi_{\cN}(\theta_{k-1})\|^2 - 2\gamma \langle \Phi_{\cN}(\theta_{k-1}), A(\theta_{k-1}) \Phi_{\cN}(\theta_{k-1})  \rangle  + \gamma^2 \|A(\theta_{k-1}) \Phi_{\cN}(\theta_{k-1})\|^2 \\
        &= \|(\Id-\gamma A(\theta_{k-1})) \Phi_{\cN}(\theta_{k-1})\|^2.
    \end{align*}
\end{proof}

In the following sections, we prove that Lemma~\ref{prop:rate} yields convergence with high probability once SGD enters a sufficiently small neighborhood of a $(\mu,L,\sigma)$-regular point. The argument follows the general strategy used for deterministic schemes in \cite[Section 3]{rebjock2023fast}; see also \cite{attouch2013convergence,ochs2018local}. We combine this framework with high-probability estimates to obtain almost sure convergence rates through Lemma~\ref{prop:rate} and an application of the Borel-Cantelli lemma.

\section{Geometric Preliminaries} \label{sec:geometric}
In this section, we control the error terms that arise in the descent lemma, Lemma~\ref{prop:rate}. Using Proposition \ref{thm:rebjock} we construct a manifold chart $\Phi: U \to \Phi(U)$ around a $(\mu,L, \sigma)$-regular point $\theta^\ast$ such that $D\Phi(\theta^\ast)$ is an orthogonal matrix. Then, for a sufficiently small neighborhood $U$ of $\theta^\ast$ the distances between the points $\theta,\theta' \in U$ are approximately equal to the distance of the images $\Phi(\theta),\Phi(\theta')$. In addition, the eigenvalues of $A(\theta) := D\Phi(\theta) \Hess f(\theta)(D\Phi(\theta))^{-1}$ are close to the eigenvalues of $\Hess f(\theta)$ and $\dim(\ker(A(\theta^\ast)))=d_{\cN}$. All remaining estimates are derived from the regularity of $f$ and $\Phi$. 

\begin{lemma} \label{prop:geometry}
    Let $\theta^\ast \in \R^d$ be a $(\mu,L, \sigma)$-regular point. Then, there exists a $C\ge 0$ such that for every $\eps>0$ there exist a neighborhood $U \subset \R^d$ of $\theta^\ast$ and a $C^1$-diffeomorphism $\Phi: U \to \Phi(U)$ with $\Phi(\theta^\ast)=0$, $D\Phi(\theta^\ast)$ is an orthogonal matrix and for all $\theta, \theta' \in U$
    \begin{itemize}
        \item[(i)] $f(\theta) \ge f(\theta^\ast)$ and $\nabla f^{-1}(\{0\})\cap U =  \Phi^{-1}((\R^{d_{\cT}} \times \{0\}^{d_{\cN}}) \cap \Phi(U))$,
        \item [(ii)] $\|D \Phi(\theta)-D\Phi(\theta')\| \le \eps$,
        \item[(iii)] $\|D\Phi(\theta)\|\le 1+\eps$,
        \item[(iv)] $\|D\Phi(\theta)\nabla f(\theta)-A(\theta) \proj_{\cN}(\Phi(\theta)) \| \le \eps \|\proj_{\cN}(\Phi(\theta))\|$, 
        \item[(v)] $\|\nabla f(\theta)\| \le C\|\proj_{\cN}(\Phi(\theta))\|$, and
        \item[(vi)] $f(\theta)-f(\theta^\ast) \le \frac 12 \langle A(\theta) \proj_{\cN}(\Phi(\theta)), \proj_{\cN}(\Phi(\theta))  \rangle 
        + \eps \|\proj_{\cN}(\Phi(\theta))\|^2,
        $
    \end{itemize}
    with $A(\theta) := D\Phi(\theta) (\Hess f)(\theta) (D\Phi(\theta))^{-1}$ for $\theta \in U$. Moreover, we have $A(\theta) \to A(\theta^\ast)$ as $\theta \to \theta^\ast$, where $A(\theta^\ast)$ is a symmetric matrix satisfying
    \begin{equation}\label{eq:43897589370}
        \ker(A(\theta^\ast)) = \R^{d_{\cT}} \times \{0\}^{d_{\cN}} \quad \text{ and } \quad \spec(A(\theta^\ast) \einschraenkung_{\{0\}^{d_{\cT}} \times \R^{d_{\cN}}}) \subset [\mu, L].
    \end{equation}
\end{lemma}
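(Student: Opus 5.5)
The plan is to build $\Phi$ from Proposition~\ref{thm:rebjock} by straightening the minimum manifold $\mathcal M$ with a tubular-neighborhood (normal-coordinate) chart. After a rotation and translation we may assume $\theta^\ast=0$, $\cT_{0}\mathcal M=\R^{d_{\cT}}\times\{0\}^{d_{\cN}}$ and $\cN_0\mathcal M=\{0\}^{d_{\cT}}\times\R^{d_{\cN}}$. Since $\mathcal M$ is a $C^1$-submanifold, it is locally the graph $\{(x,g(x))\}$ of a $C^1$ map $g:\R^{d_{\cT}}\to\R^{d_{\cN}}$ with $g(0)=0$ and $Dg(0)=0$. We then define $\Phi(x,y):=(x,y-g(x))$. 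This is a $C^1$-diffeomorphism with $D\Phi(0)=\Id$, which is orthogonal, and it maps $\mathcal M$ onto $\R^{d_{\cT}}\times\{0\}^{d_{\cN}}$. This gives (i), once $U\subset U'$ is chosen inside the neighborhood where (A1) holds. Continuity of $D\Phi$ at $0$ gives (ii) and (iii) after shrinking $U$ to a small ball. One caveat: $A(\theta)=D\Phi\,\Hess f\,(D\Phi)^{-1}$ involves only first derivatives of $\Phi$, so $C^1$ suffices. However, $\Phi$ is only $C^1$, so any Taylor expansion of $\Phi$ itself has to be handled through moduli of continuity of $D\Phi$ rather than second derivatives.

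Next comes the spectral statement \eqref{eq:43897589370}. We have $A(0)=\Hess f(0)$, which is symmetric. By \eqref{eq:MB}, its kernel equals $\cT_0\mathcal M=\R^{d_{\cT}}\times\{0\}$. Since $\Hess f(0)$ is symmetric, it leaves the normal space invariant. Its eigenvalues there are at least $\mu$ by \eqref{eq:MB}, and at most $L$ by (A3). The convergence $A(\theta)\to A(0)$ follows from continuity of $\Hess f$ and of $D\Phi$.

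The core of the proof is (iv)--(vi). For $\theta\in U$ we write $\Phi(\theta)=(x,z)$ with $z=\proj_{\cN}\Phi(\theta)$, and set $\bar\theta:=\Phi^{-1}(x,0)\in\mathcal M$. Then $\nabla f(\bar\theta)=0$, and $\theta-\bar\theta=(0,z)$ in our coordinates, because $\Phi$ only shifts the $y$-component. This gives exactly $\|\theta-\bar\theta\|=\|z\|$, which is convenient. For (v) we use Lipschitz continuity: $\|\nabla f(\theta)\|=\|\nabla f(\theta)-\nabla f(\bar\theta)\|\le L\|z\|$, so $C=L$. For (iv) we write
\[
\nabla f(\theta)=\int_0^1\Hess f(\bar\theta+t(\theta-\bar\theta))\,dt\,(\theta-\bar\theta)=\Hess f(\theta)(\theta-\bar\theta)+r,
\qquad \|r\|\le \omega(\operatorname{diam}U)\,\|z\|,
\]
where $\omega$ is a modulus of continuity of $\Hess f$. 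It then remains to compare $D\Phi(\theta)\Hess f(\theta)(\theta-\bar\theta)$ with $A(\theta)z=D\Phi(\theta)\Hess f(\theta)(D\Phi(\theta))^{-1}z$. This reduces to bounding $\|(\theta-\bar\theta)-(D\Phi(\theta))^{-1}z\|$. Here $(D\Phi(\theta))^{-1}(0,z)=(0,z)$ exactly, since $D\Phi^{-1}$ acts as the identity on the normal block; this follows from $D\Phi(\theta)=\begin{pmatrix}\Id&0\\-Dg&\Id\end{pmatrix}$. So the remaining error is only $r$, and (iv) holds after shrinking $U$ so that $(1+\eps)\,\omega\le\eps$.

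For (vi) we use a second-order Taylor expansion of $f$ around $\bar\theta$, noting $f(\bar\theta)=f(\theta^\ast)$. This requires that $f$ is constant on the connected piece of $\mathcal M$ in $U$, which holds because $\nabla f=0$ on it. We get $f(\theta)-f(\theta^\ast)=\tfrac12\langle\Hess f(\theta)(0,z),(0,z)\rangle+O(\omega\|z\|^2)$. Finally, $\langle A(\theta)z,z\rangle=\langle D\Phi\,\Hess f\,(0,z),(0,z)\rangle$, which differs from $\langle\Hess f(\theta)(0,z),(0,z)\rangle$ by at most $\|D\Phi(\theta)-\Id\|\,L\|z\|^2\le\eps L\|z\|^2$; rescaling $\eps$ gives (vi).

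The main obstacle I expect is making sure the Rebjock--Boumal neighborhood, the graph representation, and the convexity and segment arguments all live in one common neighborhood. The segments $[\bar\theta,\theta]$ must stay in $U$, and $\mathcal M\cap U$ must be exactly the zero set of $\nabla f$ there. To handle this, I would take $U=\Phi^{-1}(B_r^{\cT}\times B_r^{\cN})$ for small $r$. This set is convex in the normal fibres by construction, so the segments stay inside.
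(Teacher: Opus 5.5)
Your proof is correct and follows the paper's overall strategy: flatten $\mathcal M$ by a chart with $D\Phi(\theta^\ast)$ orthogonal, move $\theta$ along the chart fibre to $\bar\theta=\Phi^{-1}(\proj_{\cT}(\Phi(\theta)))\in\mathcal M$, and expand $\nabla f$ and $f$ along $[\bar\theta,\theta]$. The expansion uses $\nabla f(\bar\theta)=0$, $f(\bar\theta)=f(\theta^\ast)$ and continuity of $\Hess f$.

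The real difference is the choice of chart. The paper takes the abstract flattening chart $\hat\Phi$ from Proposition~\ref{thm:rebjock} and only renormalizes it, setting $\Phi=O(D\hat\Phi(\theta^\ast))^{-1}(\hat\Phi-\hat\Phi(\theta^\ast))$. Because its fibres are curved, the paper has to do extra work:
\begin{itemize}
\item it writes $\theta-\bar\theta$ as an average of $D(\Phi^{-1})$ along the segment from $\proj_{\cT}(\Phi(\theta))$ to $\Phi(\theta)$, so continuity of $D(\Phi^{-1})$ becomes an additional error source in (iv) and (vi);
\item it obtains (v) only with a constant built from bounds on $D(\Phi^{-1})$ and $\Hess f$;
\item it must separately arrange that $f$ is $C^2$ on a convex set containing $U$ and that the chart segments stay in $\Phi(U)$.
\end{itemize}

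Your graph chart $(x,y)\mapsto(x,y-g(x))$ has straight normal fibres. Hence $\theta-\bar\theta=(D\Phi(\theta))^{-1}\proj_{\cN}(\Phi(\theta))$ and $\|\theta-\bar\theta\|=\|\proj_{\cN}(\Phi(\theta))\|$ hold exactly. This has three consequences:
\begin{itemize}
\item in (iv) only the Hessian-oscillation remainder is left;
\item (v) follows directly from (A3) with the explicit constant $C=L$;
\item the segment condition is automatic for $U=\Phi^{-1}(B_r^{\cT}\times B_r^{\cN})$.
\end{itemize}
The cost is an appeal to the standard local graph representation of a $C^1$-submanifold over its tangent space. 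In exchange, the paper's argument works verbatim for any flattening chart normalized at $\theta^\ast$.
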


\begin{proof}
By Proposition~\ref{thm:rebjock}, there exists a $C^1$-diffeomorphism $\hat \Phi:U \to \hat \Phi(U)$, where $U\subset \R^d$ is a neighborhood of $\theta^\ast$, satisfying $f(\theta) \ge f(\theta^\ast)$ for all $\theta \in U$ and
\begin{align} \label{eq:mgfchart}
    \mathcal M := \nabla f^{-1}(\{0\})\cap U =  \hat \Phi^{-1}((\R^{d_{\cT}} \times \{0\}^{d_{\cN}}) \cap \hat \Phi(U)).
\end{align}
Let $O \in \R^{d \times d}$ be an orthogonal matrix such that $O \cdot \cT_{\theta^\ast}\mathcal M = \R^{d_{\cT}} \times \{0\}^{d_{\cN}}.$ Since the Jacobian matrix $D \hat \Phi(\theta^\ast)$ is invertible, we can define $\Phi(\theta):= O \cdot  (D \hat \Phi(\theta^\ast))^{-1} (\hat \Phi(\theta)-\hat \Phi(\theta^\ast))$, which is a $C^1$-diffeomorphism onto its image that satisfies all the above properties and, additionally,
$$
    \Phi(\theta^\ast)=0 \quad \text{ and } \quad 
    D \Phi (\theta^\ast)=O \cdot (D\hat \Phi(\theta^\ast))^{-1}D \hat \Phi(\theta^\ast)=O.
$$
In particular, \eqref{eq:mgfchart} still holds with $\hat \Phi$ replaced by $\Phi$ since 
$$
O \cdot  (D\hat \Phi(\theta^\ast))^{-1} (\R^{d_{\cT}} \times \{0\}^{d_{\cN}}) = O \cdot \cT_{\theta^\ast}\mathcal M = \R^{d_{\cT}} \times \{0\}^{d_{\cN}}.
$$

We show that, after possibly shrinking the neighborhood $U$, properties (ii)-(vi) hold.
First, note that after possibly shrinking $U$,
$\mathcal M := \nabla f^{-1}(\{0\})\cap U$ is connected. Since $\nabla f=0$ on
$\mathcal M$, one has $f(\theta)=f(\theta^\ast)$ for all $\theta \in \mathcal M$.
Moreover, we may assume that $D\Phi$ and $\Hess f$ are bounded on $U$ and $D (\Phi^{-1})$ is bounded on $\Phi(U)$. By Definition~\ref{ass:PL}, we finally may assume that $f$ is $C^2$ on a convex set containing $U$ and for every $\theta\in U$ and every $s\in[0,1]$
$$
\proj_{\cT}(\Phi(\theta))+s\proj_{\cN}(\Phi(\theta))\in \Phi(U).
$$

Using $\Phi \in C^1$, one has $D\Phi(\theta) \to D \Phi(\theta^\ast)=O$, as $\theta \to \theta^\ast$. Thus, properties (ii) and (iii) hold on a sufficiently small neighborhood of $\theta^\ast$.

Regarding property (iv), one has
\begin{align}\begin{split}\label{eq:3891543}
    &\|D\Phi(\theta)\nabla f(\theta)-A(\theta) \proj_{\cN}(\Phi(\theta)) \| \\
    &\le \|D \Phi(\theta)\| \, \|\nabla f(\theta)-\Hess f(\theta) (D\Phi(\theta))^{-1}\proj_{\cN}(\Phi(\theta))\|,
    \end{split}
\end{align}
where $\|D \Phi\|$ is bounded. Now, using that $\nabla f(\Phi^{-1}(\proj_{\cT}(\Phi(\theta)))=0$, we get by the fundamental theorem of calculus
\begin{align} \label{eq:hessbound}
    \nabla f(\theta) = \int_0^1 (\Hess f)((1-s)\Phi^{-1}(\proj_{\cT}(\Phi(\theta)))+s \theta)\,  (\theta-\Phi^{-1}(\proj_{\cT}(\Phi(\theta)))) \, ds.
\end{align}
Similarly, 
\begin{align} \label{eq:20372}
    \theta- \Phi^{-1}(\proj_{\cT}(\Phi(\theta))) = \int_0^1 D(\Phi^{-1})(\proj_{\cT}(\Phi(\theta))+s \proj_{\cN}(\Phi(\theta))) \proj_{\cN}(\Phi(\theta)) \, ds.
\end{align}
Substituting this identity into \eqref{eq:hessbound} yields
\begin{equation*}
    \begin{split}
      \nabla f(\theta)
&=
\biggl(\int_0^1
(\Hess f)((1-s)\Phi^{-1}(\proj_{\cT}(\Phi(\theta)))+s\theta)\,ds
\biggr)\\
&\qquad \cdot \biggl(
\int_0^1
D(\Phi^{-1})(\proj_{\cT}(\Phi(\theta))+t\proj_{\cN}(\Phi(\theta)))\,
\,dt
\biggr)
\proj_{\cN}(\Phi(\theta)).
    \end{split}
\end{equation*}
Using this together with \eqref{eq:3891543} and the continuity of $\Hess f$ and $D(\Phi^{-1})$ we conclude that 
$$
    \|D\Phi(\theta)\nabla f(\theta)-A(\theta) \proj_{\cN}(\Phi(\theta)) \| \le \eps \|\proj_{\cN}(\Phi(\theta))\|
$$
for all $\theta$ in a sufficiently small neighborhood $U$ of $\theta^\ast$. This proves (iv).

Regarding property (v), by the boundedness of $D(\Phi^{-1})$, there exists a constant $C_1 \ge 0$ such that for all $\theta \in U$
$$
    \|\theta-\Phi^{-1}(\proj_{\cT}(\Phi(\theta)))\| \le C_1 \|\proj_{\cN}(\Phi(\theta))\|,
$$ 
see \eqref{eq:20372}.
Since $\Hess f$ is bounded, say by $C_2\ge 0$, we use \eqref{eq:hessbound} to get
$$
    \|\nabla f (\theta)\| \le C_2 C_1 \|\proj_{\cN}(\Phi(\theta))\|,
$$
which proves (v).

Regarding (vi), note that for all $\theta \in U$ one has
\begin{align*}
    &f(\theta)-f(\theta^\ast) = f(\Phi^{-1}(\Phi(\theta)))-f(\Phi^{-1}(\proj_{\cT}(\Phi(\theta)))) \\
    & = \int_0^1 (\nabla f(\Phi^{-1}(\varphi_s)))^\top D\Phi^{-1}(\varphi_s) \proj_{\cN}(\Phi(\theta)) \, ds \\
    &= \int_0^1 \int_0^s  \big(\Hess f(\Phi^{-1}(\varphi_u)) D\Phi^{-1}(\varphi_u) \proj_{\cN}(\Phi(\theta)) \big)^\top D\Phi^{-1}(\varphi_s) \proj_{\cN}(\Phi(\theta)) \, du \, ds
\end{align*}
since $\nabla f(\Phi^{-1}(\proj_{\cT}(\Phi(\theta))))=0$. Using the continuity of $\Hess f$, $D(\Phi^{-1})$ and $A$ we conclude that
\begin{align*}
    &f(\theta)-f(\theta^\ast) \le \frac 12 \langle A(\theta^\ast) \proj_{\cN}(\Phi(\theta)), \proj_{\cN}(\Phi(\theta))  \rangle 
        + \frac{\eps}{2} \|\proj_{\cN}(\Phi(\theta))\|^2 \\
    & \le \frac 12 \langle A(\theta) \proj_{\cN}(\Phi(\theta)), \proj_{\cN}(\Phi(\theta))  \rangle 
        + \eps \|\proj_{\cN}(\Phi(\theta))\|^2
\end{align*}
for a sufficiently small neighborhood $U$ of $\theta^\ast$.

The statements for $A$ in \eqref{eq:43897589370} follow immediately from the continuity of $A$, the facts that $(D\Phi(\theta^\ast))^{-1}= O^{-1}$, $\cT_{\theta^\ast} \mathcal M = D\Phi^{-1}(\theta^\ast) (\R^{d_{\cT}}\times \{0\}^{d_{\cN}})$ and Proposition~\ref{thm:rebjock}.
\end{proof}

\section{Proof of the Main Result} \label{sec:proof}
In this section, we combine the geometric considerations of Section~\ref{sec:geometric} with the descent lemma, Lemma~\ref{prop:rate}, and Borel-Cantelli arguments to finish the proof of the main result. 

First, we use the geometric considerations of Section~\ref{sec:geometric} to simplify the statement of Lemma~\ref{prop:rate}.

\begin{lemma} \label{lem:1}
    Let $N \in \N_0$ and $\theta^\ast \in \R^d$ be a $(\mu,L, \sigma)$-regular point. Then, for every $\eps>0$ there exist a $\delta >0$, neighborhoods $\tilde U \subset U \subset \R^d$ of $\theta^\ast$, and a $C^1$-diffeomorphism $\Phi:  U \to \Phi( U)$ such that $D\Phi(\theta^\ast)$ is an orthogonal matrix, $A(\theta^\ast) := D\Phi(\theta^\ast) (\Hess f)(\theta^\ast) (D\Phi(\theta^\ast))^{-1}$ is a symmetric matrix satisfying
    \begin{equation*}
        \ker(A(\theta^\ast)) = \R^{d_{\cT}} \times \{0\}^{d_{\cN}} \quad \text{ and } \quad \spec(A(\theta^\ast) \einschraenkung_{\{0\}^{d_{\cT}} \times \R^{d_{\cN}}}) \subset [\mu, L],
    \end{equation*}
    and for all $k > N $
    \begin{align*}
    \E[\1_{\IB_k}&\|\proj_{\cN}(\Phi(\theta_k))\|^2 \mid \cF_{k-1}] \le  \1_{\IB_{k-1}}(m(\gamma)+\eps) \|\proj_{\cN}(\Phi(\theta_{k-1}))\|^2,
    \end{align*}
    where $m(\gamma)$ is defined in \eqref{eq:mgamma}, $\IB_N:=\{\theta_N \in \tilde U\}$ and, for $k>N$, 
    $$
    \IB_k := \bigcap_{i=N, \dots, k-1} \{\theta_i \in \tilde U\} \cap \bigcap_{i=N,\dots, k}\{\|D_i\| \le \delta/\gamma \}.
    $$
\end{lemma}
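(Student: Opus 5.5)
The plan is to combine Lemma~\ref{prop:geometry} with Lemma~\ref{prop:rate}. Given $\eps>0$, pick an auxiliary $\eta>0$, to be fixed at the end, and apply Lemma~\ref{prop:geometry} with $\eta$ in place of $\eps$. This yields $U$, $\Phi$ and the constant $C$, where $C$ does not depend on $\eta$.

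The constants of Lemma~\ref{prop:rate} are then chosen as follows:
\begin{itemize}
\item $C_{\Phi,1}=\eta$, $C_{\Phi,2}=1+\eta$, $C_{f,1}=\eta$ and $C_{f,2}=C$.
\item The bound $\|A(\theta)\|\le L+\eta$ holds after shrinking $U$. This follows from the continuity $A(\theta)\to A(\theta^\ast)$ and $\|A(\theta^\ast)\|\le L$ from \eqref{eq:43897589370}; the smoothness constant of Lemma~\ref{prop:rate} is therefore $L+\eta$.
\end{itemize}

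Next I verify hypothesis (vi). By (A4) and item (vi) of Lemma~\ref{prop:geometry},
\[
\E[\|D_k\|^2\mid\cF_{k-1}]\le \sigma(f(\theta_{k-1})-f(\theta^\ast))\le \tfrac{\sigma}{2}\langle A\Phi_\cN,\Phi_\cN\rangle+\sigma\eta\|\Phi_\cN\|^2 ,
\]
so $C_{D,1}=\sigma\eta$. For the geometry, choose $\delta>0$ and $\tilde U$ so that $\bigcup_{\theta\in\tilde U}\overline{B_{2\delta}(\theta)}\subset U$. Then shrink $\tilde U$ further so that $\|\nabla f\|_{\infty,\tilde U}\le\delta/\gamma$, which is possible since $\nabla f(\theta^\ast)=0$, and so that $R=\sup_{\tilde U}\|\Phi_\cN\|\le \eta\delta$, which is possible since $\Phi(\theta^\ast)=0$. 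Consequently $R/\delta\le\eta$.

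Lemma~\ref{prop:rate} now bounds the conditional expectation by
\[
\1_{\IB_{k-1}}\Big(\big\langle \Phi_\cN,\big[(\Id-\gamma A)^\top(\Id-\gamma A)+(1+\eta)^2\tfrac{\gamma^2\sigma}{2}A\big]\Phi_\cN\big\rangle+E(\eta)\|\Phi_\cN\|^2\Big).
\]
Every term in the error coefficient $E(\eta)$ carries a factor $C_{\Phi,1}$, $C_{f,1}$, $C_{D,1}$ or $R/\delta$, each of which is $O(\eta)$. The remaining constants $C_{f,2}=C$, $C_{\Phi,2}$ and $C_{D,2}$ stay bounded as $\eta\to0$, so $E(\eta)\to0$.

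The main obstacle is the principal quadratic form, because $A(\theta)$ need not be symmetric and its tangential block need not vanish exactly. I would handle it by continuity. As $\theta\to\theta^\ast$ and $\eta\to 0$, the matrix in brackets converges uniformly on $\tilde U$ to
\[
M:=(\Id-\gamma A(\theta^\ast))^2+\tfrac{\gamma^2\sigma}{2}A(\theta^\ast).
\]
The matrix $M$ is symmetric and block diagonal with respect to $\R^{d_\cT}\times\{0\}\oplus\{0\}\times\R^{d_\cN}$. The vector $\Phi_\cN$ lies in the normal block, where $A(\theta^\ast)$ has spectrum in $[\mu,L]$. Restricted to that block, $M$ therefore has norm $\sup_{\lambda\in[\mu,L]}\big((1-\gamma\lambda)^2+\tfrac{\gamma^2\sigma\lambda}{2}\big)$. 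This function is convex in $\lambda$, so the supremum is attained at an endpoint and equals $m(\gamma)$.

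Finally, shrink $\tilde U$ so that the bracketed matrix is within $\eps/2$ of $M$ in operator norm, and choose $\eta$ so that $E(\eta)\le\eps/2$. This gives the contraction factor $m(\gamma)+\eps$. The properties of $D\Phi(\theta^\ast)$ and $A(\theta^\ast)$ claimed in the statement are taken directly from Lemma~\ref{prop:geometry}.
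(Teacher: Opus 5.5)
Your proposal is correct and follows essentially the same route as the paper. You apply Lemma~\ref{prop:geometry} with a small auxiliary parameter, feed its bounds (with $C_{D,1}=\sigma\eta$) into Lemma~\ref{prop:rate}, replace $A(\theta)$ by the symmetric, block-diagonal $A(\theta^\ast)$ via continuity to extract $m(\gamma)$, and shrink $\tilde U$ so that $R/\delta$ is small. One point needs fixing in your last paragraph: as written, shrinking $\tilde U$ alone cannot remove the $(1+\eta)^2$ factor, so the order of choices must be stated. First fix $\eta$ small enough that both $E(\eta)$ and the excess $((1+\eta)^2-1)\frac{\gamma^2\sigma}{2}(L+\eta)$ are at most $\eps/4$, and only then shrink $\tilde U$; the paper does exactly this by keeping $(1+\eps')^2$ and the additive $\eps'$ inside the maximum before fixing $\eps'$.
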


\begin{proof} 
For $\eps'>0$ let $U \subset \R^d$ be a neighborhood of $\theta^\ast$, $\Phi:U \to \Phi(U)$ and $C\ge 0$ be a constant such that (A1)-(A4) of Definition~\ref{ass:PL} and the conclusion of Lemma~\ref{prop:geometry} are satisfied (with $\eps$ replaced by $\eps'$).

Again, we denote $\Phi_{\cN}(\theta):=\proj_{\cN}(\Phi(\theta))$ and $A(\theta):= D\Phi(\theta) (\Hess f)(\theta) (D\Phi(\theta))^{-1}$. 
   Using (A4) of Definition~\ref{ass:PL} together with (vi) of Lemma~\ref{prop:geometry}, one has on the event $\{\theta_{k-1} \in U\}$ that 
   \begin{align*}
       \E[\|D_k\|^2 \mid \mathcal F_{k-1}] &\le \sigma (f(\theta_{k-1})-f(\theta^\ast)) \le \frac{\sigma}{2} \langle A(\theta_{k-1}) \Phi_{\cN}(\theta_{k-1}), \Phi_{\cN}(\theta_{k-1}) \rangle + \sigma \eps' \|\Phi_{\cN}(\theta_{k-1})\|^2.
   \end{align*}

Moreover, note that 
    $$
    A(\theta)= D\Phi(\theta) (\Hess f)(\theta) (D\Phi(\theta))^{-1} \to  D\Phi(\theta^\ast) (\Hess f)(\theta^\ast) (D\Phi(\theta^\ast))^{-1}= A(\theta^\ast) , 
    $$
    as $\theta \to \theta^\ast $. Note that by Lemma~\ref{prop:geometry} the matrix $D\Phi(\theta^\ast)$ is orthogonal and hence $A(\theta^\ast)$ is symmetric. Moreover, it satisfies $\spec(A(\theta^\ast) \einschraenkung_{\{0\}^{d_{\cT}} \times \R^{d_{\cN}}}) \subset [\mu, L]$.
    Therefore, 
    for a sufficiently small neighborhood $ U$ of $\theta^\ast$ one has for all $\theta \in U$
    \begin{align*}
        &\|(\Id-\gamma A(\theta)) \Phi_{\cN}(\theta)\|^2 + (1+\eps')^2 \frac{\gamma^2 \sigma}{2} \langle A(\theta) \Phi_{\cN}(\theta), \Phi_{\cN}(\theta) \rangle \\
        &\le  \|(\Id-\gamma A(\theta^\ast)) \Phi_{\cN}(\theta)\|^2 + (1+\eps')^2 \frac{\gamma^2 \sigma}{2} \langle A(\theta^\ast) \Phi_{\cN}(\theta), \Phi_{\cN}(\theta) \rangle +\eps' \|\Phi_{\cN}(\theta)\|^2 \\
        & = \langle \Big( \big(\Id-\gamma A(\theta^\ast)\big)^2 +(1+\eps')^2 \frac{\gamma^2 \sigma}{2}  A(\theta^\ast) \Big) \Phi_{\cN}(\theta) ,\Phi_{\cN}(\theta) \rangle +\eps' \|\Phi_{\cN}(\theta)\|^2 \\
        & \le \left(\sup_{\lambda \in \spec(A(\theta^\ast) \einschraenkung_{\{0\}^{d_{\cT}} \times \R^{d_{\cN}}}) }(1 - \gamma \lambda)^2+(1+\eps')^2\frac{\gamma^2\sigma \lambda}{2} +\eps' \right)\|\Phi_{\cN}(\theta)\|^2\\
    & \le \max\left((1 - \gamma \mu)^2+(1+\eps')^2\frac{\gamma^2\sigma \mu}{2}+\eps', (1 - \gamma L)^2+(1+\eps')^2\frac{\gamma^2\sigma L}{2}+\eps'\right)\|\Phi_{\cN}(\theta)\|^2.
    \end{align*}
 
    Now, using Lemma~\ref{prop:rate} together with the bounds given by Lemma~\ref{prop:geometry}, there exist a constant $C(\eps')\ge 0$ with $C(\eps')\to 0$ as $\eps'\to 0$, a constant $\tilde C(\eps')\ge 0$, a $\delta(\eps')>0$ and neighborhoods $\tilde U_{\eps'} \subset U_{\eps'}$ of $\theta^\ast$ such that for all $k > N$
    $$
       \E[\1_{\IB_k}\|\Phi_{\cN}(\theta_k)\|^2 \mid \cF_{k-1}] \le  \1_{\IB_{k-1}} 
       \Big(m(\gamma) + C(\eps') +\tilde C (\eps') \frac{R}{\delta(\eps')}\Big)
       \|\Phi_{\cN}(\theta_{k-1})\|^2,
    $$
    where $R:= \sup_{\theta \in \tilde U_{\eps'}}\|\proj_{\cN}(\Phi(\theta))\|$ and $\tilde U_{\eps'}>0$ and $\delta(\eps')$ are chosen so that $\bigcup_{\theta \in \tilde U_{\eps'}} \overline{B_{2\delta(\eps')}(\theta)} \subset U_{\eps'}$ and $\|\nabla f\|_{\infty,\tilde U_{\eps'}}\le \delta(\eps')/\gamma$. Now, fix $\eps'>0$ such that $C(\eps') \le \frac{\eps}{2}$. Then, one can shrink the set $\tilde U_{\eps'}$ so that $R=\sup_{\theta \in \tilde U_{\eps'}}\|\proj_{\cN}(\Phi(\theta))\|\le (\eps \delta(\eps'))/(2 \tilde C(\eps'))$ and, thus,
    $$
       \E[\1_{\IB_k}\|\Phi_{\cN}(\theta_k)\|^2 \mid \cF_{k-1}] \le  \1_{\IB_{k-1}} 
       (m(\gamma) + \eps)
       \|\Phi_{\cN}(\theta_{k-1})\|^2,
    $$
    where
    \begin{equation*}
    m(\gamma):=\max\left((1 - \gamma \mu)^2+\frac{\gamma^2\sigma \mu}{2}, (1 - \gamma L)^2+\frac{\gamma^2\sigma L}{2}\right).
\end{equation*}
\end{proof}

Next, we show that, once SGD enters a sufficiently small neighborhood of a $(\mu,L,\sigma)$-regular point, it converges with high probability to a $(\mu,L,\sigma)$-regular point with the same objective function value.

\begin{lemma} \label{lem:2}
    Let $\theta^\ast \in \R^d$ be a $(\mu,L, \sigma)$-regular point and assume that
    \begin{equation}\label{eq:step_size_lem:2}
        0<\gamma < \frac{2}{L+\frac{\sigma }{2}}.
    \end{equation}
    Then, for every $\delta >0$ there exists an open neighborhood $V$ of $\theta^\ast$ such that for the event $\IV := \{\exists k \in \N_0 : \theta_k \in V \}$ the following holds: 
    there exists a compact set $K$ of $(\mu,L, \sigma)$-regular points and an event $\IC$ with $\P(\IC \cap \IV) \ge (1-\delta) \P(\IV)$ such that, on $\IC \cap \IV$, the SGD scheme $(\theta_k)_{k \in \N_0}$ almost surely converges to a (random) $\theta_\infty$ in $K$ with $f(\theta_\infty) = f(\theta^\ast)$ and, for every $\kappa >0$, one has $\|D_k\| \le \kappa $ for all but finitely many $k$'s.
\end{lemma}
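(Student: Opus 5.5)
Our plan is to combine the one-step contraction of Lemma~\ref{lem:1} with a maximal-inequality argument, so that once the iterates enter a small neighbourhood they stay in $\tilde U$ with high probability and converge there. Fix $\eps>0$ with $m(\gamma)+\eps=:q<1$; this is possible by \eqref{eq:step_size_lem:2}, as shown in Section~\ref{sec:quadratic}. Take $\delta_0>0$, $\tilde U\subset U$ and $\Phi$ from Lemma~\ref{lem:1}, and choose a closed ball $\overline{B_r}\subset\Phi(U)$ around $0$ with $\Phi^{-1}(\overline{B_r})\subset \tilde U$. By Lemma~\ref{prop:geometry}(i), $K:=\Phi^{-1}\big(\overline{B_r}\cap(\R^{d_\cT}\times\{0\}^{d_\cN})\big)$ is then a compact set of critical points. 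These points are minima with value $f(\theta^\ast)$ (the manifold is connected and $f\ge f(\theta^\ast)$ on $U$), and they are $(\mu,L,\sigma)$-regular, using a neighbourhood inside $U$. The neighbourhood $V$ will be $\Phi^{-1}(B_{\rho})$ for some small $\rho\ll r$. By the strong Markov property at the first entrance time $N$ into $V$, it suffices to argue conditionally on $\{\theta_N\in V\}$ for each fixed $N$; the bounds are uniform in $N$, so they can be summed over the disjoint events $\{N \text{ is the first entrance}\}$.

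\emph{Step 1 (geometric decay of the normal component).} Iterating Lemma~\ref{lem:1} with the tower property gives
\[
\E[\1_{\IB_k}\|\Phi_\cN(\theta_k)\|^2\mid\cF_N]\le q^{k-N}\|\Phi_\cN(\theta_N)\|^2\le q^{k-N}\rho^2 .
\]
Consequently $\sum_k \E[\1_{\IB_k}\|\Phi_\cN(\theta_k)\|]\lesssim \rho/(1-\sqrt q)$, using Jensen's inequality for the square root.

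\emph{Step 2 (staying in the neighbourhood).} On $\IB_k$, Lemma~\ref{prop:geometry}(iii),(v) give
\[
\|\Phi(\theta_k)-\Phi(\theta_{k-1})\|\le \gamma(1+\eps')\big(C\|\Phi_\cN(\theta_{k-1})\|+\|D_k\|\big).
\]
We control the noise by the conditional Jensen inequality, $\E[\1_{\IB_{k-1}}\|D_k\|\mid\cF_{k-1}]\le C_{D,2}\|\Phi_\cN(\theta_{k-1})\|$. Hence the expected total path length $\E\big[\sum_{k>N}\1_{\IB_k}\|\Phi(\theta_k)-\Phi(\theta_{k-1})\|\big]$ is $O(\rho)$. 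By Markov's inequality, with probability at least $1-\delta/2$ the total path length is at most $r/2$, provided $\rho$ is small. On that event $\Phi(\theta_k)\in B_r$ for as long as $\IB_k$ holds.

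The remaining ways to leave $\IB_k$ are the events $\{\|D_k\|>\delta_0/\gamma\}$. Their probability is bounded by Chebyshev's inequality:
\[
\sum_k \P(\IB_{k-1}\cap\{\|D_k\|>\delta_0/\gamma\})\le (\gamma/\delta_0)^2 C_{D,2}^2\sum_k q^{k-1-N}\rho^2 .
\]
This is smaller than $\delta/2$ for small $\rho$. The event $\IC$ is therefore the intersection of the two good events, and on $\IC$ we have $\IB_k$ for all $k\ge N$.

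\emph{Step 3 (convergence and the noise bound).} On $\IC$ the path $(\Phi(\theta_k))$ has finite length, so it is Cauchy and converges to some limit. By Borel--Cantelli applied to Step 1 (the argument of Section~\ref{sec:quadratic}), $\|\Phi_\cN(\theta_k)\|\to0$ almost surely. The limit therefore lies in $\overline{B_{r}}\cap(\R^{d_\cT}\times\{0\})$, so $\theta_\infty\in K$ and $f(\theta_\infty)=f(\theta^\ast)$. For the noise, Chebyshev's inequality gives
\[
\sum_k\P(\IB_{k-1}\cap\{\|D_k\|>\kappa\})\le \kappa^{-2}C_{D,2}^2\sum_k q^{k-N}\rho^2<\infty,
\]
and Borel--Cantelli shows $\|D_k\|\le\kappa$ eventually on $\IC$.

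The main obstacle is Step 2, i.e. controlling the tangential drift, where there is no contraction. We handle it through the summable first-moment path-length bound. The delicate points are that all the estimates are made on the stopped events $\IB_k$, and that $\rho$ must be chosen after $r$, $\delta_0$ and $q$ so that all the bounds close simultaneously.
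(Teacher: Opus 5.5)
Your proposal is correct and follows essentially the same route as the paper: decompose by the first entrance time into $V$, use geometric decay of $\|\proj_{\cN}(\Phi(\theta_k))\|^2$ from Lemma~\ref{lem:1}, bound the expected path length and apply Markov's inequality to stay in $\tilde U$, control large noise by a moment bound, and use Borel--Cantelli for the eventual bound $\|D_k\|\le\kappa$. Your differences (path length in chart coordinates, Chebyshev instead of first-moment Markov, an explicit $K$) are cosmetic; one wording point is that $(\theta_k)$ has no Markov property in general, and none is needed, since you only condition on $\cF_N$ over the $\cF_N$-measurable first-entrance events, exactly as the paper does.
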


\begin{proof}
    For $k \in \N_0$ and a neighborhood $V$ of $\theta^\ast$ denote 
    $$
    \IV_k:=\{\theta_k\in V\} \cap \bigcap_{i=0}^{k-1} \{\theta_i\notin V\}.
    $$
    In the following, we will prove that, for each $N\in\N_0$, there exists an event $\IC_N$ such that
    $$
    \P(\IC_N\cap\IV_N)\ge (1-\delta)\P(\IV_N)
    $$
    and such that the asserted convergence properties hold on $\IC_N\cap\IV_N$. Then, since $\IV = \dot \bigcup_{N \in \N_0} \IV_N$, for the event $\IC:=\bigcup_{N=0}^\infty(\IC_N\cap\IV_N)$ it holds that
    $$
        \P(\IC \cap \IV) = \sum_{N \in \N_0} \P(\IC_N \cap \IV_N) \ge (1-\delta) \sum_{N \in \N_0} \P(\IV_N) = (1-\delta) \P(\IV).
    $$
    
    The restriction \eqref{eq:step_size_lem:2} on the step-size $\gamma$ guarantees that $m(\gamma ) <1$ (see Section~\ref{sec:quadratic}). Thus, by Lemma~\ref{lem:1} there exist constants $M \ge 0$, $m' \in (0,1)$, neighborhoods $\tilde U \subset U $ of $\theta^\ast$, and a $C^1$-diffeomorphism $\Phi:  U \to \Phi( U)$ such that for all $N \in \N_0$ and $k > N$
    \begin{align}\label{eq:78977869}
    \E[\1_{\IB_k}&\|\proj_{\cN}(\Phi(\theta_k))\|^2 \mid \cF_{k-1}] \le  \1_{\IB_{k-1}}m' \,  \|\proj_{\cN}(\Phi(\theta_{k-1}))\|^2,
    \end{align}
    where $\IB_N:= \{\theta_N \in \tilde U\}$ and, for $k > N$, $\IB_k := \bigcap_{i=N, \dots, k-1} \{\theta_i \in \tilde U\} \cap \bigcap_{i=N,\dots, k}\{\|D_i\| \le M \}$. We bound the expected length of the trajectory after possibly shrinking $\tilde U$. Using Lemma~\ref{prop:geometry} (v) and (vi), for a sufficiently small neighborhood $\tilde U$ there exists a constant $C \ge 0$ such that for all $\theta \in \tilde U$, $\|\nabla f(\theta)\| \le \frac{C}{2}\|\proj_{\cN}(\Phi(\theta))\|$, and for all $k \in \N$ one has on the event $\{\theta_{k-1} \in \tilde U\}$ that 
    \begin{align} \label{eq:23798762}
         \E[\|D_k\|^2 \mid \mathcal F_{k-1}]^{1/2} \le (\sigma (f(\theta_{k-1})-f(\theta^\ast)))^{1/2}\le \frac{C}{2} \|\proj_{\cN}(\Phi(\theta_{k-1}))\|.
    \end{align}
    Thus, 
    \begin{align} \begin{split} \label{eq:23894729862}
        \sum_{k=N+1}^\infty \E[\1_{\IB_k} \|\theta_k-\theta_{k-1}\| \mid \cF_N] & \le \sum_{k=N+1}^\infty \gamma \E[\1_{\IB_{k-1}} (\|\nabla f(\theta_{k-1})\| + \|D_k\|) \mid \cF_N] \\
        & \le \sum_{k=N+1}^\infty \gamma C \E[\1_{\IB_{k-1}} \|\proj_{\cN}(\Phi(\theta_{k-1}))\|^2 \mid \cF_N]^{1/2} \\
        & \le \sum_{k=N+1}^\infty \1_{\IB_N} \gamma C (m')^{(k-N-1)/2} R < \infty ,
        \end{split}
    \end{align}
    where $R:=\sup_{\theta\in \tilde U}\|\proj_{\cN}(\Phi(\theta))\|$ and in the last inequality we have used \eqref{eq:78977869}. On the event $\IC_N := \bigcap_{k \ge N}\IB_k$ this implies almost sure convergence of $(\theta_k)_{k \in \N}$ and, since $\tilde U$ is bounded, the limit $\theta_\infty := \lim_{k \to \infty} \theta_k$ is almost surely inside a compact set $K$.
    Since $\|\proj_{\cN}(\Phi(\theta_k))\| \to 0$, by Lemma~\ref{prop:geometry} we get $\nabla f(\theta_\infty)=0$ and $f(\theta_\infty) = f(\theta^\ast)$, on $\IC_N$. 
    
    Next, we  show that for every $\delta>0$ there exists a neighborhood $V$ of $\theta^\ast$ such that, for all $N \in \N_0$, $\P(\IC_N \cap  \IV_N) \ge (1-\delta) \P(\IV_N)$.
    
    Let $r_1>0$ be such that $B_{r_1}(\theta^\ast) \subset \tilde U$. Then for $r_2>0$ with $\Phi^{-1}(B_{r_2}(\Phi(\theta^\ast))) \subset B_{r_1/2}(\theta^\ast)$, we set $V:=\Phi^{-1}(B_{r_2}(\Phi(\theta^\ast)))$. For $N \in \N_0$ and $k > N$ analogously to \eqref{eq:23894729862} one has
    \begin{equation}\label{eq:478397598}
        \sum_{i=N+1}^k \E[\1_{\IB_i\cap \IV_N} \|\theta_i-\theta_{i-1}\| \mid \cF_N] \le \1_{\IV_N} \sum_{i=N+1}^k \gamma C (m')^{(i-N-1)/2} r_2.
    \end{equation}   

    Note that, since
    $$
        \IC_N^c \cap \IV_N \subset \Bigg( \bigcup_{k=N}^\infty \Big( \IB_k \cap \IV_N\cap \{\theta_k\notin \tilde U\} \Big) \Bigg)
        \cup
        \Bigg( \bigcup_{k=N}^\infty \Big( \IB_k\cap \IV_N\cap \{\|D_{k+1}\|>M\} \Big) \Bigg),
    $$
    we have
    \begin{equation}\label{eq:347902}
        \P(\IC_N^c \cap \IV_N)
        \le \P\Bigg( \Bigg( \bigcup_{k=N}^\infty \Big( \IB_k \cap \IV_N\cap \{\theta_k\notin \tilde U\} \Big) \Bigg) \Bigg) 
        + \sum_{k=N}^\infty \E[ \1_{\IB_k\cap \IV_N} \1_{\{\|D_{k+1}\|>M\}}].
    \end{equation}
    Using the triangle inequality, we have on $\IV_N$
    $$
        \|\theta_k-\theta^\ast\| \le \|\theta_k-\theta_N\|+\|\theta_N-\theta^\ast\| \le \|\theta_k-\theta_N\|+\frac{r_1}{2}.
    $$
    Combining this with the assumption that
    $B_{r_1}(\theta^\ast)\subset \tilde U$ implies that, on $\IV_N$, $\{\theta_k\notin \tilde U\}\subset\{\|\theta_k-\theta_N\|\ge \frac{r_1}{2}\}$ and, therefore,
    $$
        \bigcup_{k=N}^\infty \Big( \IB_k \cap \IV_N\cap \{\theta_k\notin \tilde U\} \Big)  \subset \IV_N \cap \Bigg\{ \sum_{i=N+1}^\infty \1_{\IB_i}\|\theta_i-\theta_{i-1}\| \ge \frac{r_1}{2} \Bigg\}.
    $$
    Thus, by Markov's inequality and
    \eqref{eq:478397598},
    \begin{align}
    \begin{split}
        \P\Bigg( \bigcup_{k=N}^\infty \Big( \IB_k \cap \IV_N\cap \{\theta_k\notin \tilde U\} \Big) \,\Bigg|\, \cF_N \Bigg)
        &\le \frac{2}{r_1}
        \sum_{i=N+1}^\infty \E[\1_{\IB_i\cap \IV_N}\|\theta_i-\theta_{i-1}\|\mid \cF_N] \\
        &\le \1_{\IV_N}\frac{2\gamma C r_2}{r_1}\sum_{i=N+1}^\infty (m')^{(i-N-1)/2}\\
        &=\1_{\IV_N} \frac{2\gamma C r_2}{r_1(1-\sqrt{m'})}.
    \end{split}
    \label{eq:3458497529100}
    \end{align}
    Similarly, by Markov's inequality, \eqref{eq:78977869}, and \eqref{eq:23798762},
    \begin{align} \label{eq:2304797222}
    \begin{split}
        \sum_{k=N}^\infty \E[\1_{\IB_k \cap \IV_N \cap \{\|D_{k+1}\| > M\}}\mid \cF_N]
        &\le \sum_{k=N}^\infty
        \frac{1}{M} \E[ \1_{\IB_k \cap \IV_N}\|D_{k+1}\| \mid \cF_N] \\
        &\le \sum_{k=N}^\infty \frac{C}{2 M} \E[ \1_{\IB_k \cap \IV_N} \|\proj_{\cN}(\Phi(\theta_k))\|^2\mid \cF_N]^{1/2} \\
        &\le \sum_{k=N}^\infty \1_{\IV_N} \frac{C}{2M} (m')^{(k-N)/2} r_2 \le \1_{\IV_N} \frac{C r_2}{2M(1-\sqrt{m'})} .
    \end{split}
    \end{align}

    Combining \eqref{eq:347902}, \eqref{eq:3458497529100}, and
    \eqref{eq:2304797222}, we obtain
    $$
        \P(\IC_N^c\cap \IV_N\mid \cF_N) \le \1_{\IV_N}\frac{r_2}{1-\sqrt{m'}}
        \Big( \frac{2\gamma C}{r_1} +\frac{C}{2M} \Big),
    $$
    so that for a sufficiently small $r_2>0$ we have
    $ \P(\IC_N^c\cap \IV_N\mid \cF_N) \le \delta \1_{\IV_N}$. Taking expectation gives
    $$
        \P(\IC_N\cap \IV_N) =\P(\IV_N)-\P(\IC_N^c\cap \IV_N)\ge(1-\delta)\P(\IV_N).
    $$

    Lastly, note that for every $\kappa>0$ one has almost surely on $\IC_N\cap\IV_N$ that $\|D_k\|\le \kappa$ for all but finitely many $k$'s. Indeed, applying \eqref{eq:2304797222} with $M$ replaced by $\kappa$ gives
    \begin{align*}
        \sum_{k = N}^\infty \P(\IC_N \cap \IV_N \cap \{\|D_{k+1}\| >\kappa \}) & \le \sum_{k=N}^\infty \E[\1_{\IB_k \cap \IV_N \cap \{\|D_{k+1}\| > \kappa\}}] < \infty,
    \end{align*}
    so that the statement follows from the Borel-Cantelli lemma.
\end{proof}

We are now in a position to finish the proof of the main result.

\begin{proof}[Proof of Theorem~\ref{theo1}]
    Let $\delta>0$. By Lemma~\ref{lem:2}, there exists an open neighborhood $V$ of $\theta^\ast$, a compact set $K$ of $(\mu,L, \sigma)$-regular points  and an event $\IC$ with $\P(\IC \cap \IV) \ge (1-\delta) \P(\IV)$ such that, on $\IC \cap \IV$, the SGD scheme $(\theta_k)_{k \in \N_0}$ almost surely converges to a (random) point $\theta_\infty$ in $K$ with $f(\theta_\infty) = f(\theta^\ast)$, where $\IV := \{\exists k \in \N_0 : \theta_k \in V\}$.

    It remains to show that \eqref{eq:rate1} and \eqref{eq:mgamma} holds. 
    Fix $\eps >0$ such that $m(\gamma) +\eps < 1$. Using Lemma~\ref{lem:1}, for every $(\mu,L, \sigma)$-regular point $\hat \theta$ in $K$ there exist a constant $M >0$, neighborhoods $\tilde U \subset U \subset \R^d$ of $\hat \theta$, and a $C^1$-diffeomorphism $\Phi:  U \to \Phi( U)$ such that $D\Phi(\hat \theta)$ is an orthogonal matrix and for all $N \in \N_0$ and  $k > N$
    \begin{align} \label{eq:2349857293}
    \E[\1_{\IB_k}&\|\proj_{\cN}(\Phi(\theta_k))\|^2 \mid \cF_{k-1}] \le  \1_{\IB_{k-1}}(m(\gamma)+\eps/2) \|\proj_{\cN}(\Phi(\theta_{k-1}))\|^2,
    \end{align}
    where $\IB_N:= \{\theta_N \in \tilde U\}$ and, for $k > N$, $\IB_k := \bigcap_{i=N, \dots, k-1} \{\theta_i \in \tilde U\} \cap \bigcap_{i=N,\dots, k}\{\|D_i\| \le M \}$. 
    Similarly to \eqref{eq:23894729862}, after possibly shrinking the set $\tilde U$ this implies the existence of a constant $C_1\ge 0$ such that for every $k \ge N$
    \begin{align*}
        \E[\1_{\IB_\infty}  \|\theta_k-\theta_\infty\| \mid \mathcal F_N] &\le \sum_{j=k+1}^\infty \E[\1_{\IB_j}  \|\theta_j-\theta_{j-1}\| \mid \mathcal F_N]  \\
        &\le \1_{\IB_N}  C_1 \Big(m(\gamma)+\frac {\eps}{2}\Big)^{(k-N)/2},
    \end{align*}
    where $\IB_\infty := \cap_{k \ge N} \IB_k$. Using Borel-Cantelli together with
    \begin{align} \label{eq:2543858734941}
        \P(\IB_\infty \cap \{\|\theta_k-\theta_\infty\| \ge (m(\gamma)+\sfrac {3\eps}{4})^{(k-N)/2} \}) \le C_1 \Big(\frac{m(\gamma)+\frac {\eps}{2}}{m(\gamma)+\frac {3\eps}{4}}\Big)^{(k-N)/2}
    \end{align}
    we conclude that, on $\IB_\infty$, \eqref{eq:rate1} holds almost surely.
    
    Since there exists a finite open cover $\tilde U_1, \dots, \tilde U_\ell$ of $K$ consisting of neighborhoods that satisfy \eqref{eq:2543858734941}, \eqref{eq:rate1} holds almost surely on
    $$
        \IC \cap \IV \subset \bigcup_{i=1, \dots, \ell} \bigcup_{N \in \N_0}   \bigcap_{k \ge N} \{\theta_k \in \tilde U_i, \|D_k\| \le M_i\},
    $$
    where $M_1, \dots, M_\ell >0$ denote the constants in the definition of the events $\IB_k$ in \eqref{eq:2349857293} for the respective neighborhood $\tilde U_1, \dots, \tilde U_\ell$ and we have used that, on $\IC\cap\IV$, one has $\theta_k\to\theta_\infty\in K$, and for every $\kappa>0$ one has $\|D_k\|\le\kappa$ for all but finitely many $k$'s.
    \eqref{eq:mgamma} now follows from
    $$
        f(\theta)-f(\hat \theta) \le \frac 12 L \|\theta-\hat \theta\|^2 + o(\|\theta -\hat \theta\|^2),
    $$
    for every $(\mu,L, \sigma)$-regular point $\hat \theta$.
\end{proof}

\section*{Acknowledgments} SK acknowledges funding by the Deutsche Forschungsgemeinschaft (DFG, German Research Foundation) – CRC/TRR 388 ``Rough Analysis, Stochastic Dynamics and Related Fields'' – Project ID 516748464.
TK acknowledges funding by the DFG - CRC 1701 ``Port-Hamiltonian Systems'' – Project ID 531152215. 
        
\bibliographystyle{plain}
\bibliography{Asymptotic_GD_with_PL}

@incollection{benaim2006dynamics,
  title={Dynamics of stochastic approximation algorithms},
  author={Bena{\"\i}m, Michel},
  booktitle={Seminaire de probabilites XXXIII},
  pages={1--68},
  year={2006},
  publisher={Springer}
}

@inproceedings{karimi2016linear,
  title={Linear convergence of gradient and proximal-gradient methods under the \uppercase{P}olyak-\uppercase{{\L}}ojasiewicz condition},
  author={Karimi, H. and Nutini, J. and Schmidt, M.},
  booktitle={Joint European Conference on Machine Learning and Knowledge Discovery in Databases},
  pages={795--811},
  year={2016},
  organization={Springer}
}

@article{lojasiewicz1963propriete,
  title={Une propri{\'e}t{\'e} topologique des sous-ensembles analytiques r{\'e}els},
  author={{\L}ojasiewicz, S.},
  journal={Les {\'e}quations aux d{\'e}riv{\'e}es partielles},
  volume={117},
  pages={87--89},
  year={1963}
}

@article{polyak1963gradient,
  title={Gradient methods for the minimisation of functionals},
  author={Polyak, B. T.},
  journal={U.S.S.R. Comput. Math. and Math. Phys.},
  volume={3},
  number={4},
  pages={864--878},
  year={1963},
  publisher={Elsevier}
}

@article{polyak1964some,
  title={Some methods of speeding up the convergence of iteration methods},
  author={Polyak, B. T.},
  journal={U.S.S.R. Comput. Math. and Math. Phys.},
  volume={4},
  number={5},
  pages={1--17},
  year={1964},
  publisher={Elsevier}
}

@article{fehrman2020convergence,
  title={Convergence rates for the stochastic gradient descent method for non-convex objective functions},
  author={Fehrman, B. and Gess, B. and Jentzen, A.},
  journal={J. Mach. Learn. Res.},
  volume={21},
  number={136},
  pages={1--48},
  year={2020}
}

@article{rebjock2023fast,
  title={Fast convergence to non-isolated minima: four equivalent conditions for {$C^2$} functions},
  author={Rebjock, Quentin and Boumal, Nicolas},
  journal={Math. Program.},
    volume={213},
  number={1},
  pages={151--199},
  year={2025},
}

@article{dereich2023central,
  title={Central limit theorems for stochastic gradient descent with averaging for stable manifolds},
  author={Dereich, Steffen and Kassing, Sebastian},
  journal={Electron. J. Probab.},
  volume={28},
  pages={1--48},
  year={2023},
  publisher={The Institute of Mathematical Statistics and the Bernoulli Society}
}

@article{wojtowytsch2021stochastic,
  title={Stochastic gradient descent with noise of machine learning type part {I}: Discrete time analysis},
  author={Wojtowytsch, S.},
  journal={J. Nonlinear Sci.},
  volume={33},
  number={3},
  pages={45},
  year={2023},
  publisher={Springer}
}

@article{dereich2021convergence,
  title={Convergence of stochastic gradient descent schemes for \uppercase{{\L}}ojasiewicz-landscapes},
  author={Dereich, Steffen and Kassing, Sebastian},
  journal={J. Mach. Learn.},
  volume = {3},
  number = {3},
  pages = {245--281},
  year={2024}
}

@article{feehan2020morse,
  title={On the {M}orse--{B}ott property of analytic functions on Banach spaces with {{\L}}ojasiewicz exponent one half},
  author={Feehan, Paul},
  journal={Calc. Var. Partial Differential Equations},
  volume={59},
  number={2},
  pages={87},
  year={2020},
  publisher={Springer}
}

@inproceedings{vaswani2019fast,
  title={Fast and faster convergence of {S}{G}{D} for over-parameterized models and an accelerated perceptron},
  author={Vaswani, S. and Bach, F. and Schmidt, M.},
  booktitle={The 22nd international conference on artificial intelligence and statistics},
  pages={1195--1204},
  year={2019},
  organization={PMLR}
}

@article{khaledbetter,
  title={Better Theory for {S}{G}{D} in the Nonconvex World},
  author={Khaled, A. and Richt{\'a}rik, P.},
  journal={Transactions on Machine Learning Research},
year = {2023}
}

@article{robbins1951stochastic,
  title={A stochastic approximation method},
  author={Robbins, Herbert and Monro, Sutton},
  journal={The annals of mathematical statistics},
  pages={400--407},
  year={1951},
  publisher={JSTOR}
}

@article{absil2005convergence,
  title={Convergence of the iterates of descent methods for analytic cost functions},
  author={Absil, Pierre-Antoine and Mahony, Robert and Andrews, Ben},
  journal={SIAM Journal on Optimization},
  volume={16},
  number={2},
  pages={531--547},
  year={2005},
  publisher={SIAM}
}

@article{garrigos2023handbook,
  title={Handbook of convergence theorems for (stochastic) gradient methods},
  author={Garrigos, Guillaume and Gower, Robert M},
  journal={arXiv preprint arXiv:2301.11235},
  year={2023}
}

@article{gess2026exponential,
  title={Exponential convergence rates for momentum stochastic gradient descent in the overparametrized setting},
  author={Gess, Benjamin and Kassing, Sebastian},
  journal={Mathematical Programming},
  pages={1--37},
  year={2026},
  publisher={Springer}
}

@article{attouch2013convergence,
  title={Convergence of descent methods for semi-algebraic and tame problems: proximal algorithms, forward--backward splitting, and regularized {G}auss--{S}eidel methods},
  author={Attouch, Hedy and Bolte, J{\'e}r{\^o}me and Svaiter, Benar Fux},
  journal={Mathematical Programming},
  volume={137},
  number={1},
  pages={91--129},
  year={2013},
  publisher={Springer}
}

@article{ochs2018local,
  title     = {Local Convergence of the {Heavy-Ball} Method and {iPiano} for {Non-convex} Optimization},
  author    = {Ochs, Peter},
  journal   = {Journal of Optimization Theory and Applications},
  volume    = {177},
  pages     = {153--180},
  year      = {2018},
  doi       = {10.1007/s10957-018-1272-y},
  url       = {https://doi.org/10.1007/s10957-018-1272-y},
  publisher = {Springer}
}

@article{kassing2024polyak,
  title={Polyak's Heavy Ball Method Achieves Accelerated Local Rate of Convergence under {P}olyak-{L}ojasiewicz Inequality},
  author={Kassing, Sebastian and Weissmann, Simon},
  journal={arXiv preprint arXiv:2410.16849},
  year={2024}
}

@inproceedings{gupta2024nesterov,
  title={Nesterov acceleration in benignly non-convex landscapes},
  author={Gupta, Kanan and Wojtowytsch, Stephan},
  booktitle={International Conference on Learning Representations},
  volume={2025},
  pages={18741--18773},
  year={2025}
}

@book{kushner2003stochastic,
  title={Stochastic approximation and recursive algorithms and applications},
  author={Kushner, Harold J and Yin, G George},
  year={2003},
  publisher={Springer}
}

@unpublished{nejma2025polyak,
  title={Polyak-{{\L}}ojasiewicz inequality is essentially no more general than strong convexity for ${C}^2$ functions},
  author={Nejma, Aziz Ben},
  note={arXiv preprint arXiv:2512.05285},
  year={2025}
}

@misc{criscitiello2025,
  author = {Criscitiello, Chris and Rebjock, Quentin and Boumal,
    Nicolas},
  title = {If a Smooth Function Is Globally {PŁ} and Coercive, Then It
    Has a Unique Minimizer},
    year = {2025},
  date = {2025-02-08},
  howpublished = {\url{https://www.racetothebottom.xyz/posts/PL-smooth-unique/}},
  langid = {en}
}

@article{tadic2015convergence,
  title={Convergence and convergence rate of stochastic gradient search in the case of multiple and non-isolated extrema},
  author={Tadi{\'c}, Vladislav B},
  journal={Stochastic Processes and their Applications},
  volume={125},
  number={5},
  pages={1715--1755},
  year={2015},
  publisher={Elsevier}
}

@book{ljung1992stochastic,
  title={Stochastic approximation and optimization of random systems},
  author={Ljung, Lennart and Pflug, Georg Ch and Walk, Harro},
  volume={17},
  year={1992},
  publisher={Springer Science \& Business Media}
}

@article{mertikopoulos2020almost,
  title={On the almost sure convergence of stochastic gradient descent in non-convex problems},
  author={Mertikopoulos, Panayotis and Hallak, Nadav and Kavis, Ali and Cevher, Volkan},
  journal={Advances in Neural Information Processing Systems},
  volume={33},
  pages={1117--1128},
  year={2020}
}

@article{bottou2018optimization,
  title={Optimization methods for large-scale machine learning},
  author={Bottou, L{\'e}on and Curtis, Frank E and Nocedal, Jorge},
  journal={SIAM review},
  volume={60},
  number={2},
  pages={223--311},
  year={2018},
  publisher={SIAM}
}

@inproceedings{ma2018power,
  title={The power of interpolation: Understanding the effectiveness of {S}{G}{D} in modern over-parametrized learning},
  author={Ma, Siyuan and Bassily, Raef and Belkin, Mikhail},
  booktitle={International Conference on Machine Learning},
  pages={3325--3334},
  year={2018},
  organization={PMLR}
}

@article{liu2023aiming,
  title={Aiming towards the minimizers: fast convergence of {S}{G}{D} for overparametrized problems},
  author={Liu, Chaoyue and Drusvyatskiy, Dmitriy and Belkin, Misha and Davis, Damek and Ma, Yian},
  journal={Advances in neural information processing systems},
  volume={36},
  pages={60748--60767},
  year={2023}
}

@article{feng2026optimal,
  title={Optimal local linear convergence of Nesterov's accelerated gradient method for $ C^2$ functions under the {P}olyak--{{\L}}ojasiewicz inequality},
  author={Feng, Zixu and Yuan, Hao},
  journal={arXiv preprint arXiv:2603.21516},
  year={2026}
}
		
	\end{document}